\numberwithin{equation}{section}
\newtheorem{theorem}{Theorem}[section]
\newtheorem{proposition}[theorem]{Proposition}
\newtheorem{lemma}[theorem]{Lemma}
\theoremstyle{definition}
\newtheorem{problem}[theorem]{Problem}
\newtheorem*{rubinconjecture}{Conjecture}
\newtheorem{remark}[theorem]{Remark}
\newcommand{\Sph}{\mathbb{S}}
\newcommand{\R}{\mathbb{R}}
\newcommand{\C}{\mathbb{C}}
\newcommand{\N}{\mathbb{N}}
\newcommand{\Z}{\mathbb{Z}}
\newcommand{\Q}{\mathbb{Q}}
\newcommand{\T}{\mathbb{T}}
\newcommand{\Lap}{\Delta}
\newcommand{\scr}[1]{\mathscr{#1}}
\newcommand{\nnorm}[1]{\left\| #1 \right\|}
\newcommand{\Mt}[1]{M_t^{#1}}
\newcommand{\mt}[1]{m_t^{#1}}
\title{On the small denominator problem for generalized Minkowski--Funk transforms}
\author{Rui Han and Yaghoub Rahimi}
\date{}
\begin{document}

\maketitle

\begin{center}
\textit{Dedicated to Boris Rubin on the occasion of his 80th birthday.}
\end{center}

\begin{abstract}
Rubin's generalized Minkowski--Funk transforms $M_t^\alpha$ on the sphere $\Sph^n$ give rise, for irrational radii $t=\cos(\beta\pi)$, to a small denominator problem governed by the asymptotic behavior of their spectral multipliers. We show that for Lebesgue-almost every $\beta$ the corresponding two-sine small divisor inequality has infinitely many solutions, and deduce that $(M_t^\alpha)^{-1}$ is not bounded from $\tilde H^{s+\rho+1}(\Sph^n)$ to $H^s(\Sph^n)$ in the non-critical case $\rho\neq 0,1$. In the critical cases $\rho\in\{0,1\}$ we prove Rubin's Conjectures~4.4 and~4.7 on the failure of endpoint Sobolev regularity for the inverse transforms.
\end{abstract}

\section{Introduction}

Let $n \geq 2$ and let $\Sph^n \subset \R^{n+1}$ denote the unit sphere equipped
with the standard Riemannian structure. In his seminal paper on generalized
Minkowski--Funk transforms and small denominators on the sphere \cite{Rubin00},
Boris Rubin introduced a family of operators $M_t^\alpha$ on $\Sph^n$ which
interpolate between several classical transforms of integral geometry and
fundamental solution operators for hyperbolic PDEs. For background on the
classical Funk/Minkowski--Funk (spherical Radon) transform and related spherical
integral transforms, see, for instance, Helgason \cite{Helgason10} and Rubin's
monograph \cite{Rubin15}. See also Dann \cite{Dann10} for historical background
on the Minkowski--Funk transform. For fixed $\alpha \in \C$
and $t = \cos\theta \in (-1,1)$, the operator $M_t^\alpha$ is defined by
integrating a function $f$ over the spherical cap of geodesic radius $\theta$
centered at $x\in\Sph^n$ with a specific fractional weight depending on $\alpha$;
see \cite{Rubin00} for the precise definition and basic properties.

The operators $M_t^\alpha$ are $\mathrm{SO}(n+1)$-equivariant and hence diagonal
with respect to the standard decomposition of $L^2(\Sph^n)$ into spherical
harmonics. Writing
\[
  L^2(\Sph^n) \;=\; \bigoplus_{j=0}^\infty \scr{H}_j,
\]
where $\scr{H}_j$ is the finite-dimensional space of spherical harmonics of
degree $j$, we have
\[
  \Mt{\alpha} Y = \mt{\alpha}(j) \, Y, \qquad Y \in \scr{H}_j,
\]
for a scalar multiplier $\mt{\alpha}(j)$ depending on $t$, $\alpha$, $n$, and $j$.
Rubin obtained explicit formulas for $\mt{\alpha}(j)$ in terms of
hypergeometric and associated Legendre functions and, more importantly for our
purposes, a detailed asymptotic expansion as $j\to\infty$.

The inversion and Sobolev mapping properties of $\Mt{\alpha}$ with $t$ fixed
are governed by the behavior of $\mt{\alpha}(j)$ for large $j$. In particular,
one wishes to control the size of $j^{\rho+1} \mt{\alpha}(j)$, where
\[
  \rho := \alpha + \frac{n-1}{2},
\]
so as to obtain precise information about the order of smoothing and the
possibility of bounded inversion. Rubin's arguments show that for rational
values of $\beta = \theta/\pi$ one can often understand the zero set of
$\mt{\alpha}(j)$ explicitly. When $\beta$ is irrational, however, his
asymptotic formula leads to a small denominator problem involving delicate
Diophantine properties of $\beta$ and certain shifts $r_0,r_1$. In the
non-critical case $\rho\neq 0,1$ this is formulated as Problem~3.8 in
\cite[Section~3]{Rubin00}, while in the critical cases $\rho\in\{0,1\}$ it gives rise
to Conjectures~4.4 and~4.7 in \cite[Section~4]{Rubin00}.

We address Rubin's small-denominator problem for generalized Minkowski--Funk transforms.
In the critical cases $\rho\in\{0,1\}$ we prove Rubin's Conjectures~4.4 and~4.7 on the failure of
endpoint Sobolev regularity, thereby completing Rubin's analysis of the generalized
Minkowski--Funk transforms at the borderline exponents. In the non-critical regime $\rho\neq 0,1$
we give a metric resolution of Rubin's small-denominator problem: for Lebesgue-almost every
irrational $\beta$ the relevant small-divisor inequality has infinitely many solutions for every
threshold $c>0$. As a consequence, the inverse transform $(M_t^\alpha)^{-1}$ fails to be bounded
on the natural Sobolev scale for almost every radius $t=\cos(\beta\pi)$. A key ingredient is a
reformulation of Rubin's two-sine small-divisor condition as a one-dimensional inhomogeneous
approximation problem of moving-target type. This translation requires a careful analysis of the
interaction between the oscillatory terms; once it is established, the metric conclusions follow
from appropriate Khintchine-type inputs. The paper was written on the occasion of Boris Rubin's
80th birthday.

\paragraph{Metric Diophantine input and perturbed approximation.}
At the metric level, several mapping and boundedness questions in this paper
reduce to inhomogeneous Diophantine approximation and can be viewed as
moving-target problems for irrational rotations, in the spirit of Khintchine's
theorem and its inhomogeneous variants (see, e.g., \cite{Cassels57,Szuez58}).
This applies not only to the endpoint statements in the critical cases
$\rho\in\{0,1\}$ (Rubin's Conjectures~4.4 and~4.7), but also to our non-endpoint
result in Theorem~\ref{thm:boundness}, which likewise admits a reduction to
Khintchine-type inequalities with restricted denominators and moving targets.
Moving-target extensions have attracted recent attention. In higher dimensions,
one has moving-target inhomogeneous results in the
Duffin--Schaeffer/Khintchine setting due to Hauke--Ram\'{\i}rez
\cite{HaukeRamirez24}. In dimension one, however, the moving-target setting is
more delicate and typically requires additional hypotheses, such as extra
divergence or structural restrictions on the targets; see
Michaud--Ram\'{\i}rez \cite{MR}. Our proofs follow the approach of
Michaud--Ram\'{\i}rez \cite{MR}, with minor modifications to accommodate a
two-parameter family of targets with both numerator and denominator dependence,
as well as denominators restricted to an arithmetic progression; see
Section~\ref{sec:moving-khintchine}.

Related numerator-dependent perturbations have recently been studied by
Hussain--Ward \cite{HussainWard} (in arbitrary dimension), who consider
approximation by perturbed rationals of the form $(\vec a+\epsilon_{\vec a,q})/q$
with $\epsilon_{\vec a,q}$. Under a
quantitative smallness hypothesis on $|\epsilon_{\vec a,q}|$, they obtain
Jarn\'{\i}k-type zero--full laws for Hausdorff $f$--measure, and ask in
\cite[Question~2]{HussainWard} how far this hypothesis can be relaxed.
Our moving-target input addresses a different regime in dimension one: we allow an
additional denominator-dependent shift $\gamma_q$ (not necessarily tending to $0$),
and we assume smallness for the structured numerator-dependent component by writing
$\gamma_{a,q}=\gamma_q+\epsilon_{a,q}$ with $\epsilon_{a,q}$ $q$-periodic in $a$
and $|\epsilon_{a,q}|\le C_0\psi(q)$. This setting typically lies outside the
smallness hypothesis of \cite{HussainWard} when $\gamma_q\not=o(\psi(q))$, yet we
still obtain a full Lebesgue-measure conclusion, allowing also for
denominators restricted to an arithmetic progression, under the divergence hypotheses of
Theorem~\ref{thm.KinchineMovingTarget},
via the moving-target framework of Michaud--Ram\'{\i}rez \cite{MR}.

\subsection*{Main results}

We write $H^s(\Sph^n)$ for the usual Sobolev space on the sphere associated with
the Laplace--Beltrami operator. Following Rubin, we denote by $\tilde H^s(\Sph^n)$
the subspace obtained by removing the finite-dimensional kernel of $M_t^\alpha$
(see Section~\ref{sec:setup} for details).

In the non-critical regime $\rho\neq 0,1$, Rubin showed that the boundedness of
$(M_t^\alpha)^{-1}$ from $\tilde H^{s+\rho+1}(\Sph^n)$ to $H^s(\Sph^n)$ would
follow from a certain small divisor condition on the asymptotic multipliers,
and he formulated an explicit Diophantine problem in terms of a two-sine
expression $F(j)$; see Problem~\ref{prob:Rubin} below. Our first main result
(Theorem~\ref{thm:main_sd}) resolves this problem for Lebesgue-almost every
$\beta$: for any fixed $c>0$, the inequality $|F(j)|<c$ has infinitely many
integer solutions $j$. As a consequence we obtain that for almost every radius
$t=\cos(\beta\pi)$ the inverse $(M_t^\alpha)^{-1}$ is not bounded from
$\tilde H^{s+\rho+1}(\Sph^n)$ to $H^s(\Sph^n)$ (Theorem~\ref{thm:boundness}).

In the critical cases $\rho=0$ and $\rho=1$, Rubin identified the boundedness
of $(M_t^\alpha)^{-1}$ between Sobolev spaces with the finiteness of solutions
to certain Khintchine-type inequalities, and proved sharp metric results for
all exponents except the endpoints. He conjectured that the endpoint bounds
fail for almost every $\beta$ (Conjectures~4.4 and~4.7 in \cite{Rubin00}). Our
second main result (Theorem~\ref{thm:conjectures}) proves both conjectures, and
thus settles the borderline Sobolev mapping problem for Rubin's generalized
Minkowski--Funk transforms.

\subsection*{Organization of the paper}

In Section~\ref{sec:setup} we recall Rubin's generalized Minkowski--Funk transforms, 
summarize his asymptotic formula for the multipliers, and formulate the associated
small denominator problem. We then state our almost-everywhere small divisor
result (Theorem~\ref{thm:main_sd}) and its Sobolev consequence
(Theorem~\ref{thm:boundness}), and review Rubin's critical cases and conjectures.
Section~\ref{sec:proof_main_sd} contains the proof of Theorem~\ref{thm:main_sd},
based on a variant of Khintchine's theorem with moving targets. The necessary
metric Diophantine input, including a two-parameter moving-target version of
Khintchine's theorem, is developed in Section~\ref{sec:moving-khintchine}.

\subsection*{Acknowledgements}
R. Han and Y. Rahimi are partially supported by NSF DMS-2143369.  We are grateful to Felipe Ram\'{\i}rez for helpful discussions and for pointing out a short proof of Theorem~\ref{thm.KinchineMovingTarget2} in the unrestricted-denominator case $(y,z)=(1,0)$, based on Lemma~9 of Cassels, as well as an extension to the case $z=0$ (i.e.\ denominators $q\equiv 0 \pmod y$); see Remark~\ref{rem:Felipe}. We also thank him for drawing our attention to \cite{HussainWard} and Question~2 therein.

\section{Rubin's transforms, small denominators, and bounded inversion}\label{sec:setup}

We briefly recall the notation and results needed from Rubin's work. Throughout
we fix $n\geq 2$, $\alpha \in \C$ with $\alpha \neq -\tfrac{n}{2}, -\tfrac{n}{2}-1,\dots$,
and write
\[
  \rho = \alpha + \frac{n-1}{2}.
\]
We also fix a radius parameter
\[
  t = \cos\theta \in (-1,1), \qquad \theta = \beta\pi, \quad 0 < \beta < 1,\ \beta \neq \frac12.
\]

For each $j\in\N\cup\{0\}$, let $\scr{H}_j$ denote the space of spherical
harmonics of degree $j$ on $\Sph^n$, with orthogonal projection $P_j$. The
Laplace--Beltrami operator $\Lap$ on $\Sph^n$ satisfies
\[
  -\Lap Y = \lambda_j Y, \qquad Y\in\scr{H}_j,\quad
  \lambda_j = j(j+n-1),
\]
so that $\{\scr{H}_j\}_{j\ge0}$ is an eigenspace decomposition for $-\Lap$.
We write $H^s(\Sph^n)$, $s\in\R$, for the usual $L^2$–based Sobolev space on
$\Sph^n$, defined as the domain of $(1-\Lap)^{s/2}$ with the graph norm
(equivalently, via the above spectral decomposition).

Rubin's transforms $\Mt{\alpha}$ act by
\[
  \Mt{\alpha} f
  \;=\;
  \sum_{j=0}^\infty \mt{\alpha}(j) P_j f,
\]
where the multipliers $\mt{\alpha}(j)$ are given explicitly in terms of
associated Legendre functions and hypergeometric functions
(see \cite[Section~2]{Rubin00}).

Following Rubin, we will sometimes work on the reduced Sobolev space obtained
by removing the kernel of $\Mt{\alpha}$.
For fixed $\alpha$ and $t$, we define
\[
  \tilde H^s(\Sph^n)
  :=
  \overline{\{ f\in H^s(\Sph^n) : f \perp \ker \Mt{\alpha} \}}^{\,H^s(\Sph^n)}.
\]
Equivalently, $\tilde H^s(\Sph^n)$ is the closure in $H^s(\Sph^n)$ of the
orthogonal complement of $\ker \Mt{\alpha}$. On $\tilde H^s(\Sph^n)$ the
operator $\Mt{\alpha}$ is injective.

\subsection{The non-critical case $\rho\neq 0,1$}\label{sec:non_critical}

The detailed asymptotics of $\mt{\alpha}(j)$ for large $j$ depend on the
parameter $\theta = \beta\pi$ and, in particular, on $\beta = \theta/\pi$.
Rubin introduces auxiliary parameters
\[
  r_m := \frac{\rho - 1 - \beta n + \beta}{2} - m\Bigl(\beta + \frac12\Bigr),
  \qquad m = 0,1,2,\dots,
\]
and obtains a full asymptotic expansion of $\mt{\alpha}(j)$ as $j \to \infty$.
For our purposes we only need the first two terms of this expansion.

\begin{proposition}[Rubin's asymptotics, truncated]\label{prop:rubin-asymp}
Let $t = \cos\theta$, $\theta = \beta\pi$ with $0<\beta<1$, $\beta\neq 1/2$,
and let $\rho = \alpha + (n-1)/2 \notin \{0,1\}$. Then there exists a constant
$c_\beta \neq 0$ (depending on $\alpha$, $n$ and $\beta$ but not on $j$) and
real parameters
\[
  r_0=\frac{\rho-1-\beta n+\beta}{2}, \qquad r_1=r_0-(\beta+1/2),
\]
such that
\begin{equation}\label{eq:rubin-asymp}
  j^{\rho+1} \mt{\alpha}(j)
  \;=\;
  c_\beta (1+O(j^{-1})) \biggl(
    j \sin\bigl(\pi(j\beta - r_0)\bigr)
    + \rho(\rho-1) \sin\bigl(\pi(j\beta - r_1)\bigr)
    + O(j^{-1})
  \biggr)
\end{equation}
as $j\to\infty$.
\end{proposition}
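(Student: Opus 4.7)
The proposition is labeled ``Rubin's asymptotics, truncated,'' which already signals the shape of the argument: the plan is to extract the first two nontrivial terms of an asymptotic expansion that Rubin derived in full in \cite[Sections~2--3]{Rubin00}. The starting point is his closed-form expression for $\mt{\alpha}(j)$ as a $j$-independent constant times an associated Legendre (equivalently, Gauss hypergeometric) function whose degree is affine in $j$ and whose order and argument depend on $\alpha$, $n$, and $\theta=\beta\pi$; see \cite[Section~2]{Rubin00}. The asymptotic analysis then reduces to a large-degree expansion of $P_\nu^{-\mu}(\cos\theta)$ at a fixed interior angle.

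First I would invoke the classical Hilb/Darboux-type asymptotic for $P_{\nu}^{-\mu}(\cos\theta)$ at fixed $\theta\in(0,\pi)$ as $\nu\to\infty$. This produces a series of oscillating terms with phases $(\nu+\tfrac12)\theta + \text{const}$, whose successive amplitudes decrease by factors of order $\nu^{-1}$ and whose phases successively shift by $\theta/2$; the leading amplitude is of order $\nu^{-\mu-1/2}$. After substituting $\nu=j+\lambda$ with $\lambda$ linear in $\rho$ and $n$, the two leading phases become exactly $\pi(j\beta-r_0)$ and $\pi(j\beta-r_1)$ with $r_1-r_0=-(\beta+\tfrac12)$, matching Rubin's definition of $r_m$.

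Second, I would normalize: the constant $c_\beta$ absorbs all $j$-independent prefactors, while expansion of the Gamma-function ratios in Rubin's closed form contributes the overall $(1+O(j^{-1}))$ multiplier in front of the bracket. Multiplication by $j^{\rho+1}$ turns the leading amplitude $j^{-\mu-1/2}$ into a net factor of $j$, producing the dominant term $j\sin(\pi(j\beta-r_0))$. The next term in the Hilb expansion carries the standard coefficient $(\mu^2-\tfrac14)/2$ times the shifted sine; expressing $\mu$ in terms of $\rho$ via Rubin's normalization simplifies this coefficient to $\rho(\rho-1)$, giving the claimed subleading term. All remaining terms in the expansion are of size $O(j^{-1})$ relative to this one and are absorbed into the $O(j^{-1})$ inside the parentheses in \eqref{eq:rubin-asymp}.

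The main obstacle is computational rather than conceptual: verifying that the subleading coefficient equals \emph{exactly} $\rho(\rho-1)$ and not a neighboring polynomial in $\rho$. This requires careful bookkeeping of Rubin's normalization conventions against the Hilb coefficient $(\mu^2-\tfrac14)/2$. I would either check this directly against \cite[Section~3]{Rubin00}, where the same expansion is written out, or bypass the algebra by using the three-term recursion satisfied by $\mt{\alpha}(j)$ in $j$ (a consequence of the contiguous relations for the underlying hypergeometric function), which determines the subleading amplitude from the leading one once $r_0$ and $r_1$ are identified.
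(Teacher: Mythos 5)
The paper does not prove this proposition; it is an explicit restatement (``truncated'') of Rubin's asymptotic expansion, and the text immediately following the statement reads: ``A more precise expansion, with explicit coefficients $c_{m,\beta}(j,\rho)$ and higher order terms, can be found in \cite[Section~2]{Rubin00}.'' So the paper's ``proof'' is a citation to Rubin, and your proposal is a sketch of how one would rederive Rubin's expansion from scratch---a different enterprise, which you yourself acknowledge by saying you would ``check this directly against \cite[Section~3]{Rubin00}.''

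As a free-standing derivation your outline is plausible in its broad strokes (express $m_t^\alpha(j)$ via an associated Legendre or ${}_2F_1$ function whose degree is affine in $j$, then apply a Darboux/Hilb-type large-degree expansion at a fixed interior angle, then renormalize), but two details are off or left unverified. First, the phases in the standard Darboux expansion of $P_\nu^{-\mu}(\cos\theta)$ shift by $\theta\pm\pi/2$ between consecutive terms, not by $\theta/2$; this is what one must have, since the proposition's $r_1-r_0=-(\beta+\tfrac12)$ corresponds precisely to a phase increment of $\theta+\pi/2$ (up to a sign absorbed into the sine). Second, you flag the identification of the subleading coefficient with $\rho(\rho-1)$ as the ``main obstacle'' but do not resolve it; since this coefficient is exactly what feeds the later small-denominator analysis through $F(\beta,j)$, a correct matching of $\mu$ to $\rho$ under Rubin's normalization is not optional bookkeeping but the entire content of the claim. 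Given that the paper treats the proposition as quoted background rather than something to reprove, the right move is simply to cite Rubin and verify the stated values of $r_0$, $r_1$, and the coefficient $\rho(\rho-1)$ against his formulas, rather than reconstruct the Legendre asymptotics.
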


\begin{remark}
The precise form of $c_\beta$ and of the $O(j^{-1})$ error term is not important
for the discussion below; what matters is that $c_\beta\neq 0$. A more precise expansion, with explicit
coefficients $c_{m,\beta}(j,\rho)$ and higher order terms, can be found in
\cite[Section~2]{Rubin00}.
\end{remark}

Rubin uses \eqref{eq:rubin-asymp} to study the invertibility of $\Mt{\alpha}$
on Sobolev spaces $H^s(\Sph^n)$ by relating the growth or decay of
$j^{\rho+1}\mt{\alpha}(j)$ to Diophantine properties of $\beta$ and the
parameters $r_0,r_1$.

In the regime $\rho\neq 0,1$, the main obstructions to bounded inversion of
$\Mt{\alpha}$ come from the possibility that the quantity
\[
  F(\beta,j)
  :=
  j \sin\bigl(\pi(j\beta - r_0)\bigr)
  + \rho(\rho-1) \sin\bigl(\pi(j\beta - r_1)\bigr)
\]
becomes arbitrarily small along some subsequence $j\to\infty$. Indeed,
\eqref{eq:rubin-asymp} shows that
\[
  j^{\rho+1}\mt{\alpha}(j)
  \;=\;
  c_\beta (1+O(j^{-1}))\cdot \bigl(F(\beta,j) + O(j^{-1})\bigr),
\]
so that small denominators for the multipliers are encoded in the behavior of
$F(\beta,j)$.

Using this asymptotic function, Rubin proved the following.

\begin{theorem}[Rubin {\cite[Theorem~3.7]{Rubin00}}]
Let $t=\cos(\beta\pi)$ where $\beta\in \R\setminus \Q$, and let
$\rho = \alpha + (n-1)/2 \neq 0,1$. Then $(M_t^{\alpha})^{-1}$ is not bounded
from $H^{s+\rho+\mu}(\Sph^n)$ into $H^{s}(\Sph^n)$ for any $\mu\in [0,1)$.
\end{theorem}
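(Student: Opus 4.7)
The plan is to test the proposed boundedness of $(M_t^\alpha)^{-1}$ on individual spherical harmonics, thereby translating it into a uniform lower bound on the nonzero multipliers, and then to violate that lower bound via inhomogeneous Diophantine approximation of $\beta$.

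Applying boundedness of $(M_t^\alpha)^{-1}\colon H^{s+\rho+\mu}(\Sph^n)\to H^s(\Sph^n)$ to $Y\in\scr H_j$ with $m_t^\alpha(j)\ne 0$ and using $\|Y\|_{H^s}\asymp (1+j)^s\|Y\|_{L^2}$ yields
$$|m_t^\alpha(j)|\,\gtrsim\,(1+j)^{-(\rho+\mu)}\quad\text{for every }j\text{ with }m_t^\alpha(j)\ne0.$$
Feeding in the truncated asymptotic of Proposition~\ref{prop:rubin-asymp} (and $c_\beta\ne0$), this becomes, for all large such $j$,
$$|F(\beta,j)|\,\gtrsim\,j^{1-\mu}.$$
Since $\mu\in[0,1)$, it suffices to produce an infinite sequence $j_k\to\infty$ with $m_t^\alpha(j_k)\ne 0$ and $|F(\beta,j_k)|=O(1)$.

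To obtain such $j_k$, decompose
$$F(\beta,j)=j\sin\bigl(\pi(j\beta-r_0)\bigr)+\rho(\rho-1)\sin\bigl(\pi(j\beta-r_1)\bigr),$$
whose second summand is bounded by $|\rho(\rho-1)|$, and use $|j\sin(\pi(j\beta-r_0))|\le \pi j\,\|j\beta-r_0\|$. One therefore needs infinitely many $j$ satisfying
$$j\,\|j\beta-r_0\|\,\le\,C(\beta,r_0).$$
For every irrational $\beta$ and every real $r_0$ such $j$ exist: when $r_0\notin\Z\beta+\Z$ this is the inhomogeneous Minkowski theorem, giving $\liminf_{q\to\infty}q\,\|q\beta-r_0\|<\infty$; when $r_0=m\beta+n$ for integers $m,n$ one has $\|j\beta-r_0\|=\|(j-m)\beta\|$ and the conclusion reduces to the homogeneous Dirichlet theorem. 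Along this sequence $|F(\beta,j_k)|\le\pi C(\beta,r_0)+|\rho(\rho-1)|=O(1)$.

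To finish we must check $m_t^\alpha(j_k)\ne0$ for infinitely many $k$, but this is automatic: as noted in the setup, the kernel of $M_t^\alpha$ is finite-dimensional, so the set $\{j\in\N:m_t^\alpha(j)=0\}$ is finite and, after passing to a tail of the above sequence, $m_t^\alpha(j_k)\ne0$ for every $k$. Then
$$m_t^\alpha(j_k)=c_\beta\,j_k^{-(\rho+1)}\bigl(F(\beta,j_k)+O(j_k^{-1})\bigr)=O(j_k^{-(\rho+1)})=o(j_k^{-(\rho+\mu)}),$$
contradicting the multiplier lower bound of the first step. The hard part is not the Sobolev analysis but the second step: producing a bounded inhomogeneous Dirichlet approximation $j\,\|j\beta-r_0\|=O(1)$ for \emph{every} irrational $\beta$ and \emph{every} real $r_0$. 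This is precisely what the inhomogeneous Minkowski theorem provides in the generic case, and what the homogeneous reduction handles in the degenerate case $r_0\in\Z\beta+\Z$; the rest of the argument is a matter of matching exponents.
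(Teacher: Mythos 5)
Your proof is correct, and since the paper does not supply its own argument for this statement (it is quoted verbatim as Rubin's Theorem~3.7 from \cite{Rubin00}), there is nothing internal to compare against; but your route is exactly the one the surrounding discussion indicates. Testing boundedness on a single spherical harmonic to get $|m_t^\alpha(j)|\gtrsim(1+j)^{-(\rho+\mu)}$, invoking the asymptotic $j^{\rho+1}m_t^\alpha(j)=c_\beta(1+O(j^{-1}))(F(\beta,j)+O(j^{-1}))$ to reduce this to $|F(\beta,j)|\gtrsim j^{1-\mu}$, and then violating it by producing infinitely many $j$ with $j\|j\beta-r_0\|=O(1)$ via inhomogeneous Minkowski (with the correct and necessary case split when $r_0\in\Z\beta+\Z$, which can genuinely occur for $n$ odd and $\rho$ an odd integer) is the mechanism Rubin's own analysis rests on, and it is what makes $\mu=1$ the genuine endpoint: there the $j^{1-\mu}$ threshold becomes $O(1)$ and one must beat a fixed constant rather than merely stay bounded, which is precisely Problem~\ref{prob:Rubin}. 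Your observation that the finite-dimensionality of $\ker M_t^\alpha$ (asserted in the paper's setup) guarantees $m_t^\alpha(j_k)\neq 0$ along a tail of the sequence is also a point that needs to be said and is said correctly.
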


For $\mu=1$ and irrational $\beta$, as Rubin remarked in \cite[Section~3]{Rubin00}, the boundedness of $(M_t^{\alpha})^{-1}$ from $\tilde H^{s+\rho+1}(\Sph^n)$ into $H^s(\Sph^n)$ leads to the following small denominator problem.

\begin{problem}[Rubin's small denominator problem]\label{prob:Rubin}
Fix $n\geq 2$, $\alpha\in\C$ with $\rho = \alpha + (n-1)/2 \neq 0,1$, and
$t = \cos(\beta\pi)$ with $0<\beta<1$, $\beta\neq 1/2$. Let $r_0,r_1$ be the
parameters appearing in Proposition~\ref{prop:rubin-asymp}. Consider the
inequalities, for $j\in\N$,
\begin{equation}\label{eq:rubin-317}
  \bigl|F(\beta,j)\bigr|
  =
  \bigl| j \sin\bigl(\pi(j\beta - r_0)\bigr)
      + \rho(\rho-1) \sin\bigl(\pi(j\beta - r_1)\bigr)\bigr|
  < c,
\end{equation}
where $c>0$ is a sufficiently small constant.

\smallskip

Rubin asks for a description of those $\beta$ for which:
\begin{enumerate}
  \item There exists $c>0$ such that \eqref{eq:rubin-317} has only finitely many
    integer solutions $j$, so that $j^{\rho+1}\mt{\alpha}(j)$ stays uniformly
    away from zero for all sufficiently large $j$; and
  \item For every $c>0$, the inequality \eqref{eq:rubin-317} has infinitely many
    integer solutions $j$, forcing the occurrence of small denominators.
\end{enumerate}
\end{problem}

In this note, we solve this problem for almost every $\beta$.

\begin{theorem}\label{thm:main_sd}
 For Lebesgue-almost every $\beta\in(0,1)$, for any $c>0$, the inequality \eqref{eq:rubin-317} has infinitely many solutions in $j\in\N$.
\end{theorem}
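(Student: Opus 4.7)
The plan is to reduce Rubin's two-sine inequality $|F(\beta,j)|<c$ to a one-dimensional inhomogeneous Khintchine-type condition with a ``moving target'' shift depending on the denominator, and then to invoke the divergent half of the moving-target Khintchine theorem of Section~\ref{sec:moving-khintchine}. The key observation is that $|F(\beta,j)|<c$ forces the dominant term $j\sin(\pi(j\beta-r_0))$ to be $O(1)$, so $j\beta-r_0$ must lie within $O(1/j)$ of an integer.

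First I would exploit the relation $r_1=r_0-(\beta+1/2)$. Writing $j\beta-r_0=k+\delta$ with $k\in\Z$ and $\delta\in(-1/2,1/2]$, the angle-addition formulas yield
\[
F(\beta,j)=(-1)^k\bigl[\,j\sin(\pi\delta)+\rho(\rho-1)\cos(\pi(\delta+\beta))\bigr].
\]
The bound $|F(\beta,j)|<c$ forces $|\delta|=O(1/j)$, and a first-order Taylor expansion then gives
\[
F(\beta,j)=(-1)^k\bigl[\pi j\delta+\rho(\rho-1)\cos(\pi\beta)\bigr]+O(1/j).
\]
Setting $\tau(\beta):=-\rho(\rho-1)\cos(\pi\beta)/\pi$, the inequality $|F(\beta,j)|<c$ is equivalent, for large $j$ and up to adjusting $c$ by a bounded factor, to
\[
\|j\beta-r_0-\tau(\beta)/j\|<\frac{c}{\pi j},
\]
where $\|\cdot\|$ denotes the distance to the nearest integer. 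This is an inhomogeneous Diophantine inequality with approximation rate $\psi(j)=c/(\pi j)$ and moving target $\gamma_j(\beta)=r_0(\beta)+\tau(\beta)/j$. Since $\sum_j\psi(j)=\infty$, the divergent half of Theorem~\ref{thm.KinchineMovingTarget} is the natural tool.

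The principal obstacle is that Theorem~\ref{thm.KinchineMovingTarget} is formulated for targets independent of $\beta$, whereas ours depends on $\beta$ through both the affine term $r_0(\beta)=(\rho-1)/2-\beta(n-1)/2$ and the term $\tau(\beta)/j$. I would absorb the affine part into the denominator by rewriting $j\beta-r_0=(j+(n-1)/2)\beta-(\rho-1)/2$; the effective denominators $q=2j+n-1$ then run along a fixed arithmetic progression, placing the problem in the restricted-denominator setting accommodated by Theorem~\ref{thm.KinchineMovingTarget}. The residual $\beta$-dependence, now carried solely by $\tau(\beta)/j$, has Lipschitz constant $O(1/j)$, so on any interval $I\subset(0,1)$ of length proportional to $c$ (uniform in $j$) the moving target differs from its value at a fixed $\beta_0\in I$ by at most $c/(2\pi j)$. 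Applying the $\beta$-independent-target version of Theorem~\ref{thm.KinchineMovingTarget} on each interval of a finite cover of $(0,1)$, and intersecting the resulting full-measure sets over rational $c>0$, then yields the full-measure conclusion.
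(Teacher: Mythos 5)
Your reduction of $|F(\beta,j)|<c$ to the inhomogeneous form $\|j\beta-r_0-\tau(\beta)/j\|<c/(\pi j)$ with $\tau(\beta)=-\rho(\rho-1)\cos(\pi\beta)/\pi$, via the angle-addition identity and first-order Taylor expansion, is correct and matches the algebra underlying the paper's Section~\ref{sec:proof_main_sd}. Where you diverge from the paper is in how the $\beta$-dependence of the correction term is removed, and your approach is a genuinely different, valid route. The paper exploits the fact that at any near-solution one has $4k/(2j+n-1)\approx\beta$ up to $O(j^{-1})$, and therefore replaces $\cos(\pi\beta)$ by $\cos\bigl(4\pi k/(2j+n-1)\bigr)$; this produces a target $\gamma_{a,q}=x-\frac{B}{q}\cos\frac{4\pi a}{q}$ that is free of $\beta$ but depends on both numerator $a$ and denominator $q$, which is precisely what motivates the two-parameter moving-target framework of Theorem~\ref{thm.KinchineMovingTarget2} (applied through Lemma~\ref{lem:key}). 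You instead freeze $\cos(\pi\beta)$ at a reference point $\beta_m$ on each interval of a finite cover of $(0,1)$ of mesh proportional to $c$, apply the one-parameter (denominator-only) moving-target theorem on each such interval, union over the cover, and intersect over rational $c$; the Lipschitz bound $|\tau(\beta)/j-\tau(\beta_m)/j|\le|\rho(\rho-1)||I|/j$ controls the transfer. This trades the paper's intrinsic substitution for an extrinsic localization in $\beta$, and it avoids numerator-dependent targets altogether.

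Two caveats, neither fatal. First, after absorbing $r_0$ into the denominator you still need the arithmetic-progression restriction $q=2j+n-1\equiv n-1\pmod 2$, and Theorem~\ref{thm.KinchineMovingTarget} as stated (from Michaud--Ram\'{\i}rez) does \emph{not} accommodate restricted denominators; you should invoke Theorem~\ref{thm.KinchineMovingTarget2} (with $\epsilon_{a,q}\equiv 0$) or, for $n$ odd where $z=0$, the shortcut of Remark~\ref{rem:Felipe}. So your approach still leans on the paper's restricted-denominator extension, just not on its numerator-dependent part. Second, the step ``absorb $r_0$ into the denominator'' changes $\|\cdot\|$ in a nontrivial way: $\|j\beta-r_0-\cdot\|<\epsilon$ asks $(2j+n-1)\beta-(\rho-1)-\cdots$ to lie near an \emph{even} integer, not near an arbitrary integer, so you must rescale (e.g.\ work with $\beta/2$, or as the paper does with $\beta/4$ in Lemma~\ref{lem:key}) before the statement is literally of the form $\|q\beta'-\gamma_q\|<\psi(q)$. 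This is routine but worth spelling out, since it is exactly the bookkeeping that produces the factor $\beta/4$ and the even/odd dichotomy $A=2\Z$ vs.\ $2\Z+1$ in the paper's Lemma~\ref{lem:key}.
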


We postpone the proof of this theorem to Section~\ref{sec:proof_main_sd}.
As a first application, we record the following consequence for the boundedness
of $\Mt{\alpha}$ on Sobolev spaces.

\begin{theorem}\label{thm:boundness}
Let $n\ge2$ and $\alpha\in\C$ with $\rho = \alpha + (n-1)/2\neq 0,1$, and let
$t=\cos(\beta\pi)$ with $\beta\in(0,1)$ irrational. For Lebesgue-almost every
such $\beta$, the operator $(M_t^{\alpha})^{-1}$ is not bounded from
$\tilde H^{s+\rho+1}(\Sph^n)$ into $H^{s}(\Sph^n)$.
\end{theorem}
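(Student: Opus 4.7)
The plan is to combine Theorem~\ref{thm:main_sd} with Rubin's asymptotic formula \eqref{eq:rubin-asymp} to exhibit a sequence of spherical-harmonic test functions on which $(M_t^\alpha)^{-1}$ blows up. Fix $\beta$ in the Lebesgue-full set provided by Theorem~\ref{thm:main_sd}. For each $k\in\N$, the inequality $|F(\beta,j)|<1/k$ has infinitely many integer solutions, so a diagonal construction produces a strictly increasing sequence $j_k\to\infty$ with $|F(\beta,j_k)|<1/k$. Substituting into \eqref{eq:rubin-asymp} and using $c_\beta\neq 0$ gives
\[
j_k^{\rho+1}\,|m_t^\alpha(j_k)|
\;=\;|c_\beta|\bigl(1+O(j_k^{-1})\bigr)\bigl(|F(\beta,j_k)|+O(j_k^{-1})\bigr)\;\longrightarrow\;0.
\]

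Next, after passing to a subsequence on which $m_t^\alpha(j_k)\neq 0$ (see below), pick a unit-norm spherical harmonic $Y_k\in\scr{H}_{j_k}$ and set $f_k:=M_t^\alpha Y_k=m_t^\alpha(j_k)\,Y_k$. Since $Y_k$ is orthogonal to $\ker M_t^\alpha$, we have $f_k\in\tilde H^{s+\rho+1}(\Sph^n)$ with $(M_t^\alpha)^{-1} f_k=Y_k$. Using the spectral identity $\|Y_k\|_{H^\sigma}^2=(1+\lambda_{j_k})^\sigma\asymp j_k^{2\sigma}$ and cancelling the common factor, I would compute
\[
\frac{\|(M_t^\alpha)^{-1} f_k\|_{H^s}}{\|f_k\|_{H^{s+\rho+1}}}
\;=\;\frac{\|Y_k\|_{H^s}}{|m_t^\alpha(j_k)|\,\|Y_k\|_{H^{s+\rho+1}}}
\;\asymp\;\frac{1}{j_k^{\rho+1}\,|m_t^\alpha(j_k)|}\;\longrightarrow\;\infty,
\]
which directly contradicts any bound $\tilde H^{s+\rho+1}(\Sph^n)\to H^s(\Sph^n)$ for $(M_t^\alpha)^{-1}$.

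The main obstacle is the subsequence refinement ensuring $m_t^\alpha(j_k)\neq 0$, so that $Y_k$ genuinely belongs to $\tilde H^{s+\rho+1}(\Sph^n)$ and the test quantity above is well-defined. By \eqref{eq:rubin-asymp}, $m_t^\alpha(j)=0$ forces $|F(\beta,j)|=O(1/j)$; hence a small-divisor index can lie in $\ker M_t^\alpha$ only if the leading two-sine expression nearly cancels the $O(1/j)$ correction. I would handle this by a mild refinement of the construction underlying Theorem~\ref{thm:main_sd}: for a.e.\ $\beta$ one can instead select $j_k$ with $|F(\beta,j_k)|$ lying in a prescribed annular window, say $1/(2k)<|F(\beta,j_k)|<1/k$, with $j_k\gg k$. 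For such indices, the leading term in \eqref{eq:rubin-asymp} strictly dominates the $O(1/j_k)$ error, which both guarantees $m_t^\alpha(j_k)\neq 0$ and preserves $j_k^{\rho+1}|m_t^\alpha(j_k)|\to 0$. Once this refinement is in place, the spectral computation in the previous paragraph closes the proof.
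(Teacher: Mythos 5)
Your core argument — combine Theorem~\ref{thm:main_sd} with Rubin's asymptotic expansion \eqref{eq:rubin-asymp} to produce a sequence $j_k$ with $j_k^{\rho+1}|m_t^\alpha(j_k)|\to 0$, and then test $(M_t^\alpha)^{-1}$ on unit spherical harmonics $Y_k\in\scr{H}_{j_k}$ — is exactly the argument the paper intends (it is essentially what the Remark following the theorem statement spells out), and the spectral computation you give is correct.

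The place where you overcomplicate things, and in doing so introduce a gap, is the treatment of the possibility $m_t^\alpha(j_k)=0$. You propose to replace the conclusion of Theorem~\ref{thm:main_sd} (infinitely many $j$ with $|F(\beta,j)|<c$) by a two-sided ``annular window'' statement $1/(2k)<|F(\beta,j_k)|<1/k$ with $j_k\gg k$. This does not follow from Theorem~\ref{thm:main_sd} as proved: the moving-target Khintchine machinery in Section~\ref{sec:moving-khintchine} only delivers an \emph{upper} bound on the small divisor, and producing a matching lower bound is a genuinely different Diophantine statement that the paper does not establish. So as written, that step of your proof is unjustified. Fortunately it is also unnecessary. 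The paper (following Rubin) records that $\ker M_t^\alpha$ is finite-dimensional — this is precisely why $\tilde H^s(\Sph^n)$ is defined by removing a finite-dimensional kernel — so $m_t^\alpha(j)=0$ for at most finitely many $j$, and for $k$ sufficiently large every $j_k$ furnished by Theorem~\ref{thm:main_sd} automatically has $m_t^\alpha(j_k)\neq 0$. With that observation in place of your annular-window refinement, your proof matches the paper's.
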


\begin{remark}
Theorems~\ref{thm:main_sd} and~\ref{thm:boundness} together give a metric
answer to Problem~\ref{prob:Rubin} in the non-critical regime $\rho\neq 0,1$.
Indeed, for Lebesgue-almost every $\beta\in(0,1)$ the small divisor inequality
\eqref{eq:rubin-317} has infinitely many integer solutions for every $c>0$, so
that $j^{\rho+1}\mt{\alpha}(j)$ approaches $0$ along a subsequence and the
inverse transform $(M_t^\alpha)^{-1}$ fails to be bounded from
$\tilde H^{s+\rho+1}(\Sph^n)$ to $H^s(\Sph^n)$. 
\end{remark}

We now turn to the complementary, critical cases $\rho\in\{0,1\}$, where the
multipliers simplify and Rubin's conjectures concern the borderline Sobolev
mapping properties of $(M_t^\alpha)^{-1}$.

\subsection{The critical case $\rho\in \{0,1\}$ and Rubin's Conjectures 4.4 and 4.7}\label{sec:Rubin-conj}

In Section~\ref{sec:non_critical} we worked under the standing assumption
\[
  \rho = \alpha + \frac{n-1}{2} \notin \{0,1\}.
\]
Rubin treats the complementary, \emph{critical} cases
\[
  \alpha = \frac{1-n}{2} \quad (\rho = 0), \qquad
  \alpha = \frac{3-n}{2} \quad (\rho = 1),
\]
separately in \cite[Section~4]{Rubin00}. These values correspond, respectively, to the
solution operator for the spherical wave equation and to the spherical section
transform, and they lead to particularly simple multiplier formulas. In this
regime the small denominator problem can be expressed in terms of classical
Diophantine inequalities.

Write $t = \cos(\beta\pi)$ with $\beta \in (0,1)$ irrational and let
$H^s(\Sph^n)$ denote the usual Sobolev spaces. Rubin shows that for
$\alpha = (1-n)/2$ and $\alpha = (3-n)/2$ the boundedness of $(M_t^\alpha)^{-1}$
between Sobolev spaces is equivalent to the finiteness of solutions to certain
Diophantine inequalities of Khintchine type.

More precisely, for $\mu \geq 1$ Rubin introduces the inequalities
\begin{align}
  \label{eq:Rubin-48}
  \bigl\| q\beta + \tfrac12\bigr\| &< \frac{c}{q^\mu}, \\
  \label{eq:Rubin-49}
  \bigl\| (q-\tfrac12)\beta + \tfrac12\bigr\| &< \frac{c}{q^\mu}, \\
  \label{eq:Rubin-410}
  \| q\beta \| &< \frac{c}{q^\mu}, \\
  \label{eq:Rubin-411}
  \bigl\| (q-\tfrac12)\beta\bigr\| &< \frac{c}{q^\mu},
\end{align}
for $q\in\N$ and some constant $c>0$, where $\nnorm{\cdot}$ denotes the distance
to the nearest integer. He proves the following equivalence:
\begin{itemize}
    \item $(M_t^{(1-n)/2})^{-1}$ is bounded from $H^{s+\mu}(\Sph^n)$ to
    $H^{s}(\Sph^n)$ if and only if
\eqref{eq:Rubin-48} (for $n$ odd) or \eqref{eq:Rubin-49} (for $n$ even) has only
finitely many solutions $q\in\N$ for some $c>0$. 
    \item $(M_t^{(3-n)/2})^{-1}$ is bounded from $H^{s+1+\mu}(\Sph^n)$ to
    $H^{s}(\Sph^n)$ if and only if
\eqref{eq:Rubin-410} (for $n$ odd) or \eqref{eq:Rubin-411} (for $n$ even) has
only finitely many solutions $q\in\N$ for some $c>0$.
\end{itemize}

Using the metric theory of Diophantine approximation, he then proves the
following.

\begin{theorem}[Rubin {\cite[Corollaries~4.3, 4.5, 4.6]{Rubin00}}]
Let $t=\cos(\beta\pi)$ with $\beta\in (0,1)$ irrational.
\begin{itemize}
  \item For $\mu>1$, $(M_t^{(1-n)/2})^{-1}$ is bounded
    from $H^{s+\mu}(\Sph^n)$ to $H^{s}(\Sph^n)$ for almost every $\beta\in(0,1)$.
  \item For $\alpha = (3-n)/2$ and $\mu>1$, $(M_t^{(3-n)/2})^{-1}$ is bounded
    from $H^{s+1+\mu}(\Sph^n)$ to $H^s(\Sph^n)$ for almost every $\beta\in(0,1)$.
  \item For $\alpha = (3-n)/2$, $n$ odd, and $\mu=1$, one even has
    $(M_t^{(3-n)/2})^{-1} \in L(H^{s+2}(\Sph^n),H^s(\Sph^n))$ for almost no
    $\beta\in(0,1)$, i.e. the set of $\beta$ for which this boundedness holds
    has Lebesgue measure $0$.
\end{itemize}
\end{theorem}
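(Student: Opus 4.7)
The plan is to reduce each bullet to a classical metric Diophantine statement via the equivalence recorded just above (Sobolev boundedness of the inverse transform is equivalent to the existence of $c>0$ for which the corresponding inequality among \eqref{eq:Rubin-48}--\eqref{eq:Rubin-411} has only finitely many integer solutions), and then apply the appropriate half of Khintchine's theorem.

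For the positive assertions ($\mu>1$) in the first two bullets, I would use the convergence (Borel--Cantelli) half. For each $c>0$, $\mu>1$, and fixed affine coefficients $(a,b,\gamma)$ coming from \eqref{eq:Rubin-48}--\eqref{eq:Rubin-411}, one checks that
\[
\mathrm{Leb}\bigl\{\beta\in(0,1):\ \nnorm{(aq+b)\beta+\gamma}<c/q^\mu\bigr\}\ \le\ C\,c/q^\mu,
\]
uniformly in $q\in\N$. This is the elementary observation that as $\beta$ varies over $(0,1)$, the image $(aq+b)\beta+\gamma$ sweeps an interval of length $|aq+b|$, so the pullback of the $(2c/q^\mu)$-neighborhood of $\Z$ consists of at most $|aq+b|+1$ intervals of length at most $2c/(|aq+b|q^\mu)$, with total measure $O(c/q^\mu)$. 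Since $\sum_q q^{-\mu}<\infty$, Borel--Cantelli produces a full-measure set of $\beta$ on which only finitely many $q$ satisfy the inequality, and the equivalence yields the stated Sobolev boundedness.

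For the third bullet ($\mu=1$, $\alpha=(3-n)/2$, $n$ odd), I would invoke the divergence half of Khintchine's theorem. By the equivalence, non-boundedness of $(M_t^{(3-n)/2})^{-1}$ from $H^{s+2}(\Sph^n)$ to $H^s(\Sph^n)$ amounts to the statement that, for every $c>0$, the inequality $\nnorm{q\beta}<c/q$ has infinitely many integer solutions. For each fixed $c>0$, Khintchine's theorem applied to $\psi(q)=c/q$ (which is non-increasing with divergent series) gives a full-measure set $E_c\subset(0,1)$ on which this holds. Taking the countable intersection $F=\bigcap_{k\in\N}E_{1/k}$ and observing that $\nnorm{q\beta}<c/q$ is implied by $\nnorm{q\beta}<(1/k)/q$ whenever $k>1/c$, one obtains a full-measure set of $\beta$ for which the inequality has infinitely many solutions for every $c>0$, as required.

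The main technical obstacle is essentially absent: each bullet is a direct instance of Khintchine's theorem once Rubin's equivalence is in hand. The only mild care lies in the measure estimate for the inhomogeneous versions \eqref{eq:Rubin-48}, \eqref{eq:Rubin-49}, \eqref{eq:Rubin-411}, where the coefficient of $\beta$ is an affine, rather than integer-linear, function of $q$; this changes nothing for the convergence rate and is handled by the elementary counting argument above.
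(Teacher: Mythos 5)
Your argument is correct, and it is the standard metric approach: reduce via Rubin's stated equivalences to the Diophantine inequalities \eqref{eq:Rubin-48}--\eqref{eq:Rubin-411}, apply a Borel--Cantelli estimate (the convergent case of Khintchine, with the elementary measure bound for the affine coefficient $q$ or $q-\tfrac12$) for the two $\mu>1$ bullets, and apply the divergence half of Khintchine's theorem together with a countable intersection over $c=1/k$ for the third bullet. The paper itself does not prove this theorem --- it is quoted verbatim from Rubin's Corollaries~4.3, 4.5, 4.6 --- so there is no in-paper proof to compare against; your reconstruction coincides with Rubin's own argument.
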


\begin{rubinconjecture}[Rubin {\cite[Conjecture~4.4]{Rubin00}}]\label{conj:Rubin44}
For every $s\in\R$ and $n\geq 2$, the set of $\beta\in(0,1)$ such that
\[
  (M_t^{(1-n)/2})^{-1}
  \in L\bigl(H^{s+1}(\Sph^n), H^{s}(\Sph^n)\bigr)
\]
has Lebesgue measure zero. Equivalently, for Lebesgue-almost every
$\beta\in(0,1)$ the operator $(M_t^{(1-n)/2})^{-1}$ fails to be
bounded from $H^{s+1}(\Sph^n)$ to $H^{s}(\Sph^n)$.
\end{rubinconjecture}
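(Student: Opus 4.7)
The plan is to reduce Conjecture~\ref{conj:Rubin44} to the divergence side of an inhomogeneous Khintchine-type problem via the equivalence Rubin established in \cite[Section~4]{Rubin00}. Specialized to $\mu=1$, that equivalence says that the boundedness of $(M_t^{(1-n)/2})^{-1}$ from $H^{s+1}(\Sph^n)$ to $H^s(\Sph^n)$ is equivalent to the existence of some $c>0$ for which \eqref{eq:Rubin-48} (if $n$ is odd) or \eqref{eq:Rubin-49} (if $n$ is even) has only finitely many integer solutions $q\in\N$. Negating and intersecting over $c=1/k$, $k\in\N$, the task reduces to showing that for each fixed $c>0$ the relevant inequality admits infinitely many integer solutions for Lebesgue-a.e.\ $\beta\in(0,1)$.

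For $n$ odd, \eqref{eq:Rubin-48} reads $\|q\beta+\tfrac12\|<c/q$, an inhomogeneous Diophantine inequality with the \emph{fixed} target $-\tfrac12$. Since $\sum_{q\ge 1} c/q=\infty$, the classical inhomogeneous Khintchine theorem of Szüsz/Cassels (see \cite{Szuez58,Cassels57}) immediately yields infinitely many integer solutions for a.e.\ $\beta$, closing this case.

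For $n$ even, rewriting \eqref{eq:Rubin-49} as $\|q\beta-(\beta-1)/2\|<c/q$ exposes the \emph{$\beta$-dependent} target $\gamma(\beta)=(\beta-1)/2$, so no fixed-target Khintchine theorem applies directly. This instead fits the one-dimensional moving-target Khintchine theorem (Theorem~\ref{thm.KinchineMovingTarget}) developed in Section~\ref{sec:moving-khintchine}: I take $\psi(q)=c/q$ (divergent sum), $\gamma_q=(\beta-1)/2$ independent of both $q$ and the numerator $a$, and trivial numerator-perturbation $\epsilon_{a,q}\equiv 0$, with no arithmetic-progression restriction on $q$. The divergence half of that theorem then produces infinitely many integer solutions for a.e.\ $\beta$. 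Combining the two parities and translating back via Rubin's equivalence gives the claimed failure of boundedness.

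The main obstacle is the moving-target step for $n$ even. The target $\gamma(\beta)$, although a very simple affine function of $\beta$, genuinely varies with $\beta$, and in dimension one the passage from fixed to $\beta$-dependent targets is delicate: there is no cheap transference reducing it to classical inhomogeneous Khintchine, so the divergence conclusion must be obtained along the Michaud--Ram\'{\i}rez \cite{MR} approach, adapted as in Section~\ref{sec:moving-khintchine}. Once Theorem~\ref{thm.KinchineMovingTarget} is in hand, however, the deduction of Conjecture~\ref{conj:Rubin44} is the clean two-case reduction sketched above.
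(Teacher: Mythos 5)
Your reduction to the Diophantine inequalities via Rubin's equivalence, and your treatment of the case $n$ odd via Szüsz, both match the paper. The gap is in the case $n$ even. You rewrite \eqref{eq:Rubin-49} as $\|q\beta-(\beta-1)/2\|<c/q$ and then ``take $\gamma_q=(\beta-1)/2$'' in Theorem~\ref{thm.KinchineMovingTarget}. This is not a valid application of that theorem: both Theorem~\ref{thm.KinchineMovingTarget} and Theorem~\ref{thm.KinchineMovingTarget2} require the target sequence $\gamma_q$ (resp.\ $\gamma_{a,q}$) to be chosen \emph{a priori}, independently of the variable $\beta$ being approximated. The conclusion ``$\mathrm m(W(\psi,\gamma))=1$'' is about a fixed sequence $\gamma$; if the target is allowed to depend on $\beta$, the statement changes character entirely (e.g.\ $\gamma_q=q\beta+\tfrac12\bmod 1$ makes the inequality fail for every $\beta$), and no limsup/Borel--Cantelli argument of the Michaud--Ramírez type applies as stated. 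You flag the $\beta$-dependence of the target as ``the main obstacle'' but then assert that Section~\ref{sec:moving-khintchine} resolves it; it does not, because nothing in that section allows a $\beta$-dependent target.

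The paper instead removes the $\beta$-dependence with a change of variable: writing $(q-\tfrac12)\beta+\tfrac12=(2q-1)(\beta/2)+\tfrac12$ and setting $\beta'=\beta/2$, $q'=2q-1$, the inequality \eqref{eq:Rubin-49} becomes the \emph{fixed}-target inequality $\|q'\beta'+\tfrac12\|<c/q\le 2c/q'$ with $q'$ ranging over the \emph{odd} integers. Thus the task becomes a fixed-target inhomogeneous Khintchine problem with denominators restricted to the arithmetic progression $q'\equiv 1\pmod 2$. This is exactly what Theorem~\ref{thm.KinchineMovingTarget2} provides, with $\gamma_{a,q}\equiv\tfrac12$ constant and $(y,z)=(2,1)$; the arithmetic-progression restriction is essential here, contrary to your statement that no such restriction is needed. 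So the missing idea is the change of variable that trades a $\beta$-dependent target for an odd-denominator restriction; once you have it, the deduction goes through exactly as you sketched for $n$ odd.
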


\begin{rubinconjecture}[Rubin {\cite[Conjecture~4.7]{Rubin00}}]\label{conj:Rubin47}
For $n\geq 2$ even and $s\in\R$, the set of $\beta\in(0,1)$ such that
\[
  (M_t^{(3-n)/2})^{-1}
  \in L\bigl(H^{s+2}(\Sph^n), H^{s}(\Sph^n)\bigr)
\]
has Lebesgue measure zero. Equivalently, for almost every $\beta\in(0,1)$ the
operator $(M_t^{(3-n)/2})^{-1}$ fails to be bounded from
$H^{s+2}(\Sph^n)$ to $H^{s}(\Sph^n)$.
\end{rubinconjecture}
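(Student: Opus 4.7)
By the Rubin equivalence recalled in Section~\ref{sec:Rubin-conj} (case $\mu=1$, $n$ even), $(M_t^{(3-n)/2})^{-1}$ is bounded from $H^{s+2}(\Sph^n)$ to $H^s(\Sph^n)$ if and only if \eqref{eq:Rubin-411} admits only finitely many solutions $q\in\N$ for some $c>0$. Therefore the conjecture is equivalent to the metric assertion that for Lebesgue-almost every $\beta\in(0,1)$ and every $c>0$, the inequality $\bigl\|(q-\tfrac12)\beta\bigr\|<c/q$ has infinitely many solutions $q\in\N$. The plan is to reduce this statement to a homogeneous Khintchine-type inequality with denominators restricted to the odd integers, and then to invoke the metric Diophantine input developed in Section~\ref{sec:moving-khintchine}.

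\paragraph{Change of variables.} Writing $\bigl\|(q-\tfrac12)\beta\bigr\|=\bigl\|q\beta-\beta/2\bigr\|$ and performing the substitution $\gamma=\beta/2$, which is a bi-Lipschitz bijection $(0,1)\to(0,1/2)$ and hence preserves Lebesgue null sets in both directions, one computes
\[
 \bigl\|q\beta-\beta/2\bigr\|=\bigl\|(2q-1)\gamma\bigr\|.
\]
Setting $Q=2q-1$, so that $Q$ ranges over the odd positive integers as $q$ ranges over $\N$, and noting $c/q=2c/(Q+1)$, the inequality becomes $\|Q\gamma\|<2c/(Q+1)$. Since the conclusion is quantified over all $c>0$, the multiplicative constants are harmless, and it suffices to show that for a.e.\ $\gamma$ and every $c>0$, $\|Q\gamma\|<c/Q$ has infinitely many solutions with $Q\equiv 1\pmod{2}$.

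\paragraph{Application of restricted-denominator Khintchine.} This is a direct application of the two-parameter moving-target Khintchine theorem (Theorem~\ref{thm.KinchineMovingTarget}, or the simpler Theorem~\ref{thm.KinchineMovingTarget2}) with approximation function $\psi(Q)=c/Q$, trivial targets $\gamma_Q\equiv 0$ and $\epsilon_{a,Q}\equiv 0$, and arithmetic-progression parameters $(y,z)=(2,1)$. The divergence hypothesis
\[
 \sum_{\substack{Q\ge 1\\ Q\equiv 1\,(\mathrm{mod}\,2)}}\psi(Q)\;=\;\sum_{k\ge 0}\frac{c}{2k+1}\;=\;\infty
\]
is trivial, and the theorem yields a full-measure set of $\gamma$ on which $\|Q\gamma\|<c/Q$ has infinitely many odd solutions. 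Intersecting over the countable family $c\in\{1/k:k\in\N\}$ preserves full measure, and pulling back via $\beta=2\gamma$ gives the conjecture.

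\paragraph{Main obstacle.} The substantive work is entirely encapsulated in the restricted-denominator Khintchine input, whose proof follows Michaud--Ram\'{\i}rez and requires quasi-independence estimates for the events $\{\|Q\gamma\|<\psi(Q)\}$ along the progression $Q\equiv 1\pmod{2}$. Once that metric ingredient is in hand, the reduction above is routine: the change of variables is affine, the constant absorption is harmless because the conclusion is quantified over all $c>0$, and no genuine moving-target phenomenon survives the substitution. The same framework, with constant target $1/2$ in place of $0$ (which then genuinely behaves as an inhomogeneous shift), handles Conjecture~4.4 in the even-dimensional case; Conjecture~4.4 in odd dimensions follows directly from classical inhomogeneous Khintchine applied to \eqref{eq:Rubin-48}.
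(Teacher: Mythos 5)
Your proposal follows the same route as the paper: substitute $\gamma=\beta/2$ to convert \eqref{eq:Rubin-411} (with $\mu=1$) into a homogeneous Khintchine inequality $\|Q\gamma\|<c'/Q$ along odd $Q$, then apply Theorem~\ref{thm.KinchineMovingTarget2} with $(y,z)=(2,1)$ and trivial targets $\gamma_{a,Q}\equiv 0$; the paper proves the $\gamma_{a,q}=1/2$ case (Rubin's \eqref{eq:Rubin-49}) in detail and then says the analysis of \eqref{eq:Rubin-411} is analogous, which your write-up simply makes explicit. Two small corrections are worth flagging: the divergence hypothesis you must check is the iterated-logarithm-corrected condition from Theorem~\ref{thm.KinchineMovingTarget} (which indeed holds for $\psi(Q)=c/Q$), not merely $\sum\psi(Q)=\infty$; and Theorem~\ref{thm.KinchineMovingTarget} by itself does not permit denominators restricted to an arithmetic progression, so only Theorem~\ref{thm.KinchineMovingTarget2} applies here — calling it the \emph{simpler} of the two inverts their relationship, since it is the congruence-restricted generalization.
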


Our second main result is the following.

\begin{theorem}\label{thm:conjectures}
Conjectures~4.4 and 4.7 of Rubin in \cite{Rubin00} are true. 
\end{theorem}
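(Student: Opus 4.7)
The plan is to translate both conjectures into divergence-type metric Diophantine statements via Rubin's equivalences recalled in Section~\ref{sec:Rubin-conj}, and then to extract them from the Khintchine-type input established in Section~\ref{sec:moving-khintchine} (Theorem~\ref{thm.KinchineMovingTarget}). By the equivalences quoted from \cite{Rubin00}, Conjecture~4.4 reduces to the assertion that for a.e.\ $\beta\in(0,1)$ and every $c>0$ the inequality \eqref{eq:Rubin-48} (for $n$ odd) or \eqref{eq:Rubin-49} (for $n$ even) has infinitely many solutions $q\in\N$, while Conjecture~4.7 reduces to the same statement for \eqref{eq:Rubin-411} (with $n$ even). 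A standard countable intersection over $c=1/k$ reduces each statement to the a.e.\ infinitude of solutions for a single fixed $c>0$.

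For the $n$ odd case of Conjecture~4.4, the inequality \eqref{eq:Rubin-48} carries the fixed inhomogeneous target $1/2$, density $\psi(q)=c/q$, and denominators ranging over all of $\N$. Since $\sum_q \psi(q)$ diverges, Theorem~\ref{thm.KinchineMovingTarget} in its unrestricted-denominator, fixed-target form produces infinitely many solutions for a.e.\ $\beta$, handling this case.

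For the $n$ even cases of Conjectures~4.4 and~4.7, the shift $(q-1/2)\beta$ depends linearly on $\beta$. My plan is to absorb this dependence by the change of variables $\widetilde\beta:=\beta/2$, $p:=2q-1$, under which $(q-1/2)\beta=p\widetilde\beta$ and $c/q=2c/(p+1)$. Then \eqref{eq:Rubin-49} and \eqref{eq:Rubin-411} transform, respectively, into
\[
  \bigl\|p\widetilde\beta+\tfrac12\bigr\|<\frac{2c}{p+1}
  \qquad\text{and}\qquad
  \bigl\|p\widetilde\beta\bigr\|<\frac{2c}{p+1},
\]
with $p$ ranging over the odd positive integers. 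The map $\beta\mapsto\widetilde\beta$ is a smooth bijection of $(0,1)$ onto $(0,1/2)$ and preserves Lebesgue null sets, while $\sum_{p\equiv 1(2)}2c/(p+1)$ still diverges. I would then invoke Theorem~\ref{thm.KinchineMovingTarget} with denominators restricted to the arithmetic progression $p\equiv 1\pmod 2$ and with fixed target $1/2$ in the first case or zero target in the second, to obtain infinitely many admissible $p$ for a.e.\ $\widetilde\beta$. Pulling back along the substitution gives the a.e.\ conclusion in $\beta$.

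The main obstacle I anticipate is essentially bookkeeping: verifying that the hypotheses of Theorem~\ref{thm.KinchineMovingTarget}, in particular its divergence condition summed only over denominators in the restricted progression, are satisfied after the substitution, and that the choice $\epsilon_{a,q}\equiv 0$ together with a constant $\gamma_q$ fits the $\gamma_{a,q}=\gamma_q+\epsilon_{a,q}$ framework of that theorem. Once this is in place, the proof of Theorem~\ref{thm:conjectures} is a direct combination of Rubin's equivalences with the divergence-type input from Section~\ref{sec:moving-khintchine}; no new analytic ingredient beyond what is already developed there is required.
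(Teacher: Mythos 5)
Your proposal follows the same route as the paper's proof: reduce both conjectures, via Rubin's stated equivalences, to showing that \eqref{eq:Rubin-48}, \eqref{eq:Rubin-49}, and \eqref{eq:Rubin-411} each have infinitely many solutions for a.e.\ $\beta$ and every $c>0$, handle \eqref{eq:Rubin-48} by an unrestricted inhomogeneous Khintchine input, and handle \eqref{eq:Rubin-49} and \eqref{eq:Rubin-411} by the change of variable $\beta\mapsto\beta/2$ (equivalently $p=2q-1$), which converts the half-integer denominator $q-1/2$ into an odd integer $p$ and produces a fixed-target inhomogeneous problem with denominators restricted to an arithmetic progression. The only substantive differences are cosmetic: for \eqref{eq:Rubin-48} the paper cites Szüsz's classical inhomogeneous Khintchine theorem (which only needs $\sum\psi(q)=\infty$), whereas you invoke the stronger Michaud--Ram\'{\i}rez moving-target result; both apply since $\psi(q)=c/q$ satisfies the more stringent divergence hypothesis. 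One citation slip: the restricted-denominator, $\gamma_{a,q}=\gamma_q+\epsilon_{a,q}$ framework you correctly describe and rely on is Theorem~\ref{thm.KinchineMovingTarget2}, not Theorem~\ref{thm.KinchineMovingTarget} (the latter has no arithmetic-progression restriction); with that label corrected, the argument goes through exactly as in the paper.
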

\begin{proof}
   It suffices to show for $\mu=1$, for a.e. $\beta$, for any $c>0$, the inequalities \eqref{eq:Rubin-48}, \eqref{eq:Rubin-49} and \eqref{eq:Rubin-411} have infinitely many solutions.
    First note that this is true for \eqref{eq:Rubin-48} due to the inhomogeneous Diophantine inequality of \cite{Szuez58}. In fact, Szüsz proved that for any $\gamma\in \R$, and any non-increasing $\psi: \N\to (0,\infty)$ such that $\sum_q\psi(q)=\infty$, the inequality
    \[\|q\beta-\gamma\|<\psi(q)\] 
    has infinitely many solutions for a.e. $\beta$. Szüsz's result applies to \eqref{eq:Rubin-48} with $\gamma=1/2$, since clearly $\sum_q 1/q=\infty$.

    Regarding \eqref{eq:Rubin-49}: after the change of variable $\beta\to 2\beta$, it suffices to show for any $c>0$, the inequality
    \begin{align}\label{eq:rubin-49_odd}
    \bigl\|q\beta+\tfrac{1}{2}\bigr\|<\frac{c}{q}
    \end{align}
    has infinitely many odd solutions for a.e. $\beta$.  This is a fixed-target inhomogeneous Khintchine problem with restricted denominators. In our setting we can simply apply Theorem~\ref{thm.KinchineMovingTarget2} with $\gamma_{a,q} = 1/2$.
    The analysis of \eqref{eq:Rubin-411} is analogous to that of \eqref{eq:Rubin-49}.
\end{proof}

\section{Proof of Theorem \ref{thm:main_sd}}\label{sec:proof_main_sd}

        The proof uses crucially the following lemma:
\begin{lemma}\label{lem:key}
For any given $x$ and $B$, and $A=2\Z$ or $A=2\Z+1$, for any $c>0$, there exists a full measure set of $\beta$ such that there are infinitely many solutions with $k\in \Z$ and $j\in A$ to the following equation:
\begin{align}\label{eq:even}
    \left\|\frac{\beta}{4}-\frac{k+x}{j}+\frac{B}{j^2}\cos\frac{4\pi k}{j}\right\|\leq \frac{c}{j^2}.
\end{align}
\end{lemma}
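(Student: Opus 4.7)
The plan is to reduce \eqref{eq:even} to a one-dimensional inhomogeneous Diophantine approximation problem with a two-parameter moving target, to which the restricted-denominator Khintchine-type result Theorem~\ref{thm.KinchineMovingTarget2} applies directly. To set up the reduction, note that $\|y\|\le c/j^2$ means $|y-m|\le c/j^2$ for some $m\in\Z$; writing this out with $y=\beta/4-(k+x)/j+(B/j^2)\cos(4\pi k/j)$, multiplying through by $j$, and replacing $k$ by $k':=k+jm\in\Z$ (legitimate because $\cos(4\pi k/j)$ depends only on $k\bmod j$) converts \eqref{eq:even} into the equivalent statement that there exists $k\in\Z$ with
$$\bigl|j\alpha-k-\gamma_{k,j}\bigr|\le\frac{c}{j},\qquad \alpha:=\frac{\beta}{4},\qquad \gamma_{k,j}:=x-\frac{B}{j}\cos\!\frac{4\pi k}{j}.$$
Since $\beta\mapsto\beta/4$ is an affine bijection between $(0,1)$ and $(0,1/4)$, it suffices to prove that for a.e.\ $\alpha$ this reformulated inequality has infinitely many solutions $(k,j)\in\Z\times A$.

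Next I cast $\gamma_{k,j}$ in the two-parameter moving-target framework outlined in the introduction. With the decomposition
$$\gamma_{k,j}=\gamma_j+\epsilon_{k,j},\qquad \gamma_j:=x,\qquad \epsilon_{k,j}:=-\frac{B}{j}\cos\!\frac{4\pi k}{j},$$
the perturbation $\epsilon_{k,j}$ is $j$-periodic in $k$ and satisfies $|\epsilon_{k,j}|\le |B|/j$. Setting $\psi(j):=c/j$, both choices $A=2\Z$ and $A=2\Z+1$ thin the harmonic series only by a factor of $2$, so $\sum_{j\in A}\psi(j)=\infty$; moreover $|\epsilon_{k,j}|\le C_0\psi(j)$ with $C_0:=|B|/c$. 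The restriction $j\in A$ corresponds to denominators in the arithmetic progression $j\equiv z\pmod 2$ with $z\in\{0,1\}$, which is exactly the restricted-denominator setting of Theorem~\ref{thm.KinchineMovingTarget2}.

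Applying Theorem~\ref{thm.KinchineMovingTarget2} with these data yields, for a.e.\ $\alpha$, infinitely many $(k,j)\in\Z\times A$ satisfying $|j\alpha-k-\gamma_{k,j}|\le\psi(j)=c/j$. Reversing the reduction in Step~1 then produces infinitely many solutions of \eqref{eq:even} for a.e.\ $\beta$. The structural hypotheses of Theorem~\ref{thm.KinchineMovingTarget2} (the $q$-periodicity of $\epsilon_{a,q}$ in $a$, the smallness $|\epsilon_{a,q}|\le C_0\psi(q)$, and the divergence $\sum_{q\in A}\psi(q)=\infty$) are all verified by inspection here, so the present reduction is essentially mechanical. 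The genuine technical burden lies in Theorem~\ref{thm.KinchineMovingTarget2} itself, which adapts the Michaud--Ram\'{\i}rez approach \cite{MR} to accommodate both the arithmetic-progression restriction and the numerator-dependent component $\epsilon_{a,q}$; that is the content of Section~\ref{sec:moving-khintchine}.
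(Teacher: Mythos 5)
Your proposal is correct and takes essentially the same approach as the paper: the paper derives Lemma~\ref{lem:key} directly from Theorem~\ref{thm.KinchineMovingTarget2} with $\psi(q)=c/q$, $\gamma_{a,q}=x-\frac{B}{q}\cos(4\pi a/q)$, and $(y,z)\in\{(2,0),(2,1)\}$, exactly the data you identify. Your reduction merely spells out the $\beta\mapsto\beta/4$ change of variable and the absorption of the integer $m$ into $k$ via $j$-periodicity, which the paper leaves implicit; the one cosmetic imprecision is that you check $\sum_{j\in A}\psi(j)=\infty$ rather than the stronger divergence hypothesis of Theorem~\ref{thm.KinchineMovingTarget} over all $q\in\N$, but that hypothesis is also immediate for $\psi(q)=c/q$.
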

Lemma~\ref{lem:key} may be viewed as a
variant of Khintchine’s moving–target problem. We postpone its proof to Sec. \ref{sec:moving-khintchine}. 
Here, we will show how it implies Theorem \ref{thm:main_sd}. Recall 
\begin{align}
    \rho=\alpha+\frac{n-1}{2},
\end{align}
and
\begin{align}
    r_0=\frac{\rho-1-\beta n+\beta}{2}, \text{ and } r_1=r_0-(\beta+\frac{1}{2}).
\end{align}

Assume $n$ is odd (or even), and apply Lemma \ref{lem:key} to $x=(\rho-1)/4$, $B=\rho(\rho-1)/\pi$, and $A=2\Z$ (or $2\Z+1$ respectively), for a.e. $\beta$, there are infinitely many $j$'s such that
\begin{align}\label{eq:sd_1}
    \left\|\frac{\beta}{4}-\frac{(\rho-1)/4+k}{2j+n-1}+\frac{\rho(\rho-1)}{\pi(2j+n-1)^2}\cos\frac{4\pi k}{2j+n-1}\right\|\leq \frac{c}{j^2}.
\end{align}
This implies that there exists an integer $m\in \Z$:
\begin{align}\label{eq:sd_1'}
    \left|\frac{\beta}{4}-\frac{(\rho-1)/4+k}{2j+n-1}+\frac{\rho(\rho-1)}{\pi(2j+n-1)^2}\cos\frac{4\pi k}{2j+n-1}-m\right|\leq \frac{c}{j^2},
\end{align}
or equivalently
\begin{align}\label{eq:sd_1''}
    \left|\beta-\frac{\rho-1+4k}{2j+n-1}+\frac{4\rho(\rho-1)}{\pi(2j+n-1)^2}\cos\frac{4\pi k}{2j+n-1}-4m\right|\leq \frac{4c}{j^2}.
\end{align}
Note that \eqref{eq:sd_1''} implies
\begin{align}
    \left|\frac{4k}{2j+n-1}-\left(\beta-\frac{\rho-1}{2j+n-1}\right)+4m\right|\leq \frac{4c+C_{\rho,n}}{j^2}.
\end{align}
Therefore,
\begin{align}\label{eq:cos=cos+ep2}
    \cos\frac{4\pi k}{2j+n-1}
    =&\cos \left(\beta\pi-\frac{\pi(\rho-1)}{2j+n-1}\right)+\varepsilon_1 \notag\\
    =&\cos(\beta\pi)\cos\left(\frac{\pi(\rho-1)}{2j+n-1}\right)+\sin(\beta\pi)\sin\left(\frac{\pi(\rho-1)}{2j+n-1}\right)+\varepsilon_1 \notag\\
    =&\cos(\beta\pi)+\varepsilon_2,
\end{align}
where 
\[|\varepsilon_1|\leq \frac{4\pi c+C_{\rho,n}}{j^2}, \text{ and } |\varepsilon_2|\leq \frac{C_{\rho,n}}{j}.\]
Plugging \eqref{eq:cos=cos+ep2} into \eqref{eq:sd_1''} yields
\begin{align}
    \left|\beta-\frac{\rho-1+4k}{2j+n-1}+\frac{4\rho(\rho-1)}{\pi(2j+n-1)^2}\cos(\pi\beta)+\frac{\varepsilon_3}{j^2}-4m\right|\leq \frac{4c}{j^2},
\end{align}
where 
\[|\varepsilon_3|\leq C_{\rho,n}\cdot \varepsilon_2\leq \frac{(C_{\rho,n})^2}{j} \leq c \quad \text{for large enough }  j.\]
This implies
\begin{align}
    \left|\beta-\frac{\rho-1+4k}{2j+n-1}+\frac{4\rho(\rho-1)}{\pi(2j+n-1)^2}\sin\pi(\beta+1/2)-4m\right|\leq \frac{4c+|\varepsilon_3|}{j^2} \leq \frac{5c}{j^2},
\end{align}
and hence
\begin{align}\label{eq:sd_2}
    \left|(2j+n-1)\beta-(\rho-1+4k)+\frac{4\rho(\rho-1)}{\pi(2j+n-1)}\sin(\pi(\beta+1/2))-4m(2j+n-1)\right|\leq \frac{10c}{j},
\end{align}
for $j$ large enough. 
Noting that
\begin{align}
    \left|\frac{4\rho(\rho-1)}{2j+n-1}-\frac{2\rho(\rho-1)}{ j}\right|\leq \frac{C_{\rho,n}}{j^2} \leq \frac{c}{j} \quad \text{for large enough }  j.
\end{align}
Then \eqref{eq:sd_2} implies
\begin{align}
    \left|(2j+n-1)\beta-(\rho-1+4k)+\frac{2\rho(\rho-1)}{ \pi j}\sin(\pi(\beta+1/2))-4m(2j+n-1)\right|\leq \frac{12c}{j}.
\end{align}
Dividing by $2$ and re-arranging the terms we get
\begin{align}
    \left|j\beta+\frac{n-1}{2}\beta-\left(\frac{\rho-1}{2}+2k\right)+\frac{\rho(\rho-1)}{\pi j}\sin(\pi(\beta+1/2))-2m(2j+n-1)\right|\leq \frac{6c}{j},
\end{align}
Recall that $\frac{(n-1)\beta-(\rho-1)}{2}=-r_0$, hence the inequality above turns into:
\begin{align}\label{eq:sd_3}
    \left|j\beta-r_0-2k+\frac{\rho(\rho-1)}{\pi j}\sin(\pi(\beta+1/2))-2m(2j+n-1)\right|\leq \frac{6c}{j}.
\end{align}
This implies
\begin{align}\label{eq:jbeta-r0}
    |j\beta-r_0-2k-2m(2j+n-1)|\leq \frac{6c+C_{\rho}}{j}\ll 1. 
\end{align}
Hence 
\begin{align}\label{eq:sd_4}
    \sin\pi(j\beta-r_0)
    =&\sin\pi(j\beta-r_0-2k-2m(2j+n-1))\\
    =&\pi(j\beta-r_0-2k-2m(2j+n-1))+\frac{C_1}{j^3},
\end{align}
where $|C_1|\leq (6c+ C_{\rho})^3$.  Thus \eqref{eq:sd_3} together with \eqref{eq:sd_4} imply
\begin{align}\label{eq:sd_5}
    \left|\sin\pi(j\beta-r_0)+\frac{\rho(\rho-1)}{j}\sin(\pi(\beta+1/2))\right|\leq \frac{7\pi c}{j}.
\end{align}
Recall $\beta+\frac{1}{2}=r_0-r_1=(j\beta-r_1)-(j\beta-r_0)$.
Hence
\begin{align}\label{eq:sd_6}
    \frac{\rho(\rho-1)}{j}\sin(\pi(\beta+1/2))
    =&\frac{\rho(\rho-1)}{j}\left(\sin\pi((j\beta-r_1)-(j\beta-r_0))\right)\notag\\
    =&\frac{\rho(\rho-1)}{j}[\sin \pi(j\beta-r_1)\cos\pi(j\beta-r_0)-\cos\pi(j\beta_1-r_1)\sin \pi(j\beta-r_0)]\notag\\
    =&\frac{\rho(\rho-1)}{j}\sin\pi(j\beta-r_1)+\frac{C_2}{j^2},
\end{align}
with $|C_2|\leq 2(6c+C_{\rho})$. Above, we used \eqref{eq:jbeta-r0} and that
\begin{align}
    \cos\pi(j\beta-r_0)=1-O_{c,\rho}(j^{-2}), \text{ and } \sin\pi(j\beta-r_0)=O_{c,\rho}(j^{-1}).
\end{align}
Plugging \eqref{eq:sd_6} into \eqref{eq:sd_5}, we have
\begin{align}
    \left|\sin\pi(j\beta-r_0)+\frac{\rho(\rho-1)}{j}\sin\pi(j\beta-r_1)\right|\leq \frac{8\pi c}{j},
\end{align}
which is 
\[\left|j\sin\pi(j\beta-r_0)+\rho(\rho-1)\sin\pi(j\beta-r_1)\right|\leq 8\pi c,\]
as claimed. \qed

\section{Khintchine's theorem for two-parameter moving targets}\label{sec:moving-khintchine}

Note that Lemma~\ref{lem:key} (see Section~\ref{sec:proof_main_sd}) may be viewed as a
variant of Khintchine’s moving–target problem. 
Before formulating the version relevant for our argument, we recall a result of 
Michaud--Ramírez.  
For a sequence of real numbers $\gamma = (\gamma_q)$ and an approximation function 
$\psi : \mathbb{N} \to \mathbb{R}_{>0}$, define
\[
    W(\psi,\gamma)
    =
    \Bigl\{
        \beta \in [0,1] :
        \nnorm{ \beta - \tfrac{a+\gamma_q}{q} }
        < \tfrac{\psi(q)}{q}
        \text{ for infinitely many } (a,q)\in  \Z\times \mathbb{N}
    \Bigr\},
\]
where $\nnorm{\cdot}$ denotes distance to the nearest integer, and let $\mathrm{m}$ denote
Lebesgue measure on $\T=\R/\Z$.

\begin{theorem}[Moving–Target Khintchine]\cite[Theorem 1]{MR}
\label{thm.KinchineMovingTarget}
Let $\gamma=(\gamma_q)$ be any sequence of real numbers, and let $\psi$ be monotone decreasing.  
Suppose that there exists $\varepsilon>0$ and an integer $k\ge2$ such that
\[
    \sum_{q=1}^{\infty}
    \frac{\psi(q)}
    {\sqrt{\log q \, (\log\log q)\, (\log\log\log q)\cdots
    (\log^{(k)} q)^{\,1+\varepsilon}}}
    = \infty,
\]
where $\log^{(k)}$ denotes the $k$-fold iterated logarithm.  
Then $\mathrm{m}(W(\psi,\gamma)) = 1$.  
In particular, the weaker divergence condition 
\[
    \sum_{q=1}^{\infty} \psi(q)\, (\log q)^{-1/2-\varepsilon} = \infty
\]
already suffices to guarantee full measure.
\end{theorem}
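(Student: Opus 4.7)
I would recast the statement as a divergence--type Borel--Cantelli problem on $\T=\R/\Z$. Setting
\[
  A_q \;:=\; \bigcup_{a\in\Z}\Bigl(\tfrac{a+\gamma_q}{q}-\tfrac{\psi(q)}{q},\,\tfrac{a+\gamma_q}{q}+\tfrac{\psi(q)}{q}\Bigr)\cap[0,1],
\]
one has $\mathrm{m}(A_q)=\min(2\psi(q),1)$ and $W(\psi,\gamma)=\limsup_{q\to\infty} A_q$. The hypothesis forces $\sum_q\psi(q)=\infty$, so the plan is to apply the Chung--Erd\H{o}s / Kochen--Stone inequality
\[
  \mathrm{m}\bigl(\limsup A_q\bigr) \;\ge\; \limsup_{N\to\infty}\frac{\bigl(\sum_{q\le N}\mathrm{m}(A_q)\bigr)^2}{\sum_{q,q'\le N}\mathrm{m}(A_q\cap A_{q'})}
\]
to obtain strictly positive measure, and then upgrade this to full measure by a Gallagher-type zero--one law, exploiting that $\limsup A_q$ is essentially invariant under the rotations $\beta\mapsto\beta+1/N$.

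\textbf{Pair correlations and moving targets.} The technical heart is a pair-correlation bound of the form
\[
  \mathrm{m}(A_q\cap A_{q'}) \;\le\; \mathrm{m}(A_q)\mathrm{m}(A_{q'}) \;+\; C\,\frac{\psi(q)\psi(q')\gcd(q,q')}{qq'}
\]
for $q\ne q'$. Expanding both indicator functions in Fourier series on $\T$ relegates the shifts $\gamma_q,\gamma_{q'}$ to pure phase factors $e^{2\pi i k\gamma_q/q}$, which disappear when one passes to absolute values. Thus moving targets do not degrade the magnitude of the quasi-independence estimate, and one recovers essentially the classical Khintchine overlap bound. Summing the main term produces the numerator of the Kochen--Stone ratio, while the error term is controlled by a standard divisor sum over multiples of $\mathrm{lcm}(q,q')$, generating a logarithmic loss.

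\textbf{Iterated logarithms via dyadic chaining.} The nested iterated-log denominators in the hypothesis arise from a dyadic chain: within each block $q\in[2^m,2^{m+1})$ the Chung--Erd\H{o}s argument yields positive density of hits up to a loss of order $\sqrt{\log 2^m}$, and iterating the Borel--Cantelli reasoning across the successive scales $m\mapsto n\mapsto \log n\mapsto\cdots$ produces the cascade of iterated logarithms. The parameter $k$ merely counts how many iterations are performed before the tail becomes summable with exponent $1+\varepsilon$, which explains why any $k\ge 2$ works. The weaker corollary $\sum\psi(q)(\log q)^{-1/2-\varepsilon}=\infty$ then follows from the $k=2$ instance by a single Cauchy--Schwarz, absorbing the remaining iterated factors into the $\varepsilon$ slack.

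\textbf{Main obstacle.} The delicate step, which I expect to be the hardest, is the uniform pair-correlation bound in the presence of arbitrary shifts: one must verify that no pathological alignment of $(\gamma_q)$ across different $q$ can conspire to violate quasi-independence. Confining the $\gamma_q$ to harmless phase factors in the Fourier expansion is the key observation of Michaud--Ram\'{\i}rez, and I would follow their route. The bookkeeping for the nested iterated-log losses along the dyadic chain is routine in principle but technically intricate.
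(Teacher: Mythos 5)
The paper does not actually prove Theorem~\ref{thm.KinchineMovingTarget}; it cites it from \cite{MR} and instead proves the generalization Theorem~\ref{thm.KinchineMovingTarget2} by the Michaud--Ram\'{\i}rez method, so I compare your sketch against that. Your overall shape (limsup set $A_q$, a Chung--Erd\H{o}s/Kochen--Stone second-moment bound for positive measure, then an upgrade to full measure) is the right family of ideas, but three specific steps do not go through as stated.

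First and most seriously, the zero--one-law step. You propose to upgrade positive measure to full measure by "a Gallagher-type zero--one law, exploiting that $\limsup A_q$ is essentially invariant under the rotations $\beta\mapsto\beta+1/N$." That invariance is precisely what fails when the targets $\gamma_q$ move: each individual $A_q$ is $1/q$-periodic, but a rotation by $1/N$ does not leave the other $A_{q'}$, hence the limsup, even approximately invariant, and Cassels' and Gallagher's proofs use more than periodicity of the individual sets. For arbitrary $(\gamma_q)$ no Gallagher-type $0$--$1$ law is available; this is one of the genuine obstacles in the one-dimensional moving-target setting and is explicitly why \cite{MR} (and the paper, in the proof of Theorem~\ref{thm.KinchineMovingTarget2}) instead establish a \emph{quantitative} lower bound $\mathrm{m}(W\cap U)\gg\mathrm{m}(U)^2$ uniformly over open sets $U$ (via Lemma~\ref{lem:EqU-overlap} and Proposition~\ref{prop:all01}) and then invoke the Beresnevich--Dickinson--Velani local-to-global lemma (Proposition~\ref{prop:BDV}). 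Without replacing your zero--one law by something of this kind, the proof has a hole.

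Second, the pair-correlation estimate you claim, $\mathrm{m}(A_q\cap A_{q'})\le\mathrm{m}(A_q)\mathrm{m}(A_{q'})+C\,\psi(q)\psi(q')\gcd(q,q')/(qq')$, is too strong. What one can actually prove (Lemma~\ref{lem:EqEr-overlap}) is an error term of order $\gcd(q,r)\,\psi(q)/q$ for $r<q$, i.e.\ the overlap bound takes the form $\mu(A_q\cap A_r)\le\kappa\,\mu(A_q)\bigl(\mu(A_r)+\eta(q,r)\bigr)$ with $\eta(q,r)=\gcd(q,r)/q$. This distinction is not cosmetic: summing $\eta(q,r)$ over $r\le q$ gives something of divisor-function size, $\eta(q)\le d(q)$, and the whole iterated-logarithm loss in the hypothesis is exactly the price of controlling this divisor-weighted sum. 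Your Fourier heuristic is a reasonable alternative route to an overlap bound, but "the phases disappear after absolute values" is not enough by itself; one must still extract the $\ell=0$ main term and estimate the tail over $\ell\in\mathrm{lcm}(q,q')\Z\setminus\{0\}$, and doing so does not produce the error term you wrote. The paper avoids Fourier analysis entirely and counts lattice solutions to $|(ar-bq)-kqr+(r\gamma_q-q\gamma_r)|<(C_0+1)qr\Delta$ directly.

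Third, the iterated logarithms do not arise from a "dyadic chain across the scales $m\mapsto n\mapsto\log n\mapsto\cdots$." In the actual argument they come from the interaction of two facts: (i) $\eta(q)\le d(q)$ forces a divisor weight in the divergence condition \eqref{eq:divergence}; (ii) Proposition~\ref{prop:all01} requires an increasing $h$ with $\sum_{\ell\ge0}1/h(2^\ell)<\infty$, and $h(x)=F(\log x)$ with $F(x)=x(\log x)\cdots(\log^{(k-2)}x)^{1+\varepsilon}$ is essentially the minimal choice making that series converge. Transferring the resulting divergence from the divisor weight to the logarithmic weight in the hypothesis is precisely what Lemmas~\ref{lem:sum+lowerbound'} and~\ref{lem:sum_psi_dn_infty} do, via a lower bound of the form $\sum_{n\le x}1/d(n)\gg x/\sqrt{\log x}$. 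Finally, the passage to the "weaker" divergence condition $\sum\psi(q)(\log q)^{-1/2-\varepsilon}=\infty$ is not a Cauchy--Schwarz step; it is just termwise comparison, since $(\log q)^{-1/2-\varepsilon}\le\bigl(\log q\,\log\log q\cdots(\log^{(k)}q)^{1+\varepsilon'}\bigr)^{-1/2}$ for large $q$.
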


The setting of Lemma~\ref{lem:key} is close in spirit to
Theorem~\ref{thm.KinchineMovingTarget}, but it does not fit that framework
directly. In the Michaud--Ram\'{\i}rez theorem, the target shift $\gamma_q$
depends only on the denominator $q$. In our application, by contrast, the relevant perturbation varies with both the
numerator and the denominator: we are led to a family of moving targets of the form
\[
    \bigl(\gamma_{a,q}\bigr)_{a\in\mathbb{Z},\, q\in\mathbb{N}},
    \qquad
    \gamma_{a,q}=\gamma_q+\epsilon_{a,q},
\]
where the error terms $\epsilon_{a,q}$ are uniformly small (e.g.\ $|\epsilon_{a,q}|\ll \psi(q)$)
and, for each fixed $q$, are $q$-periodic in $a$, i.e.\ $\epsilon_{a+q,q}=\epsilon_{a,q}$.

Accordingly, we record below a two-parameter and congruence-restricted version
(denominators in a fixed arithmetic progression) of the moving-target statement
that suffices for our purposes. Rather than aiming for maximal generality, the
result is tailored to the structure arising here and may be viewed as an
adaptation of the approach of Michaud--Ram\'{\i}rez to this slightly richer
family of targets.

\begin{theorem}
\label{thm.KinchineMovingTarget2}
Let $\psi$ be monotone decreasing and satisfy the divergence hypotheses of
Theorem~\ref{thm.KinchineMovingTarget}.
Let $(\gamma_q)_{q\in\mathbb{N}}$ be any sequence of real numbers. For each $q$,
let $(\epsilon_{a,q})_{a\in\mathbb{Z}}$ be a $q$-periodic family (i.e.\ $\epsilon_{a+q,q}=\epsilon_{a,q}$)
satisfying $|\epsilon_{a,q}|\le C_0\psi(q)$ for some constant $C_0>0$, and set
$\gamma_{a,q}:=\gamma_q+\epsilon_{a,q}$.
Then, for any $y\in \N$ and any $z\in\{0,1,\dots,y-1\}$,
\[
    W_{y,z}(\psi,\gamma)
    =
    \Bigl\{
        \beta\in \T:
        \nnorm{\beta - \tfrac{a+\gamma_{a,q}}{q}}
        < \tfrac{\psi(q)}{q}
        \text{ for infinitely many }(a,q)\in\Z\times\N \text{ with } q\equiv z\!\!\!\pmod y
    \Bigr\}
\]
has full Lebesgue measure, i.e.\ $\mathrm{m}(W_{y,z}(\psi,\gamma)) = 1$.
\end{theorem}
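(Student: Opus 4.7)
The plan is to adapt the Michaud--Ram\'{\i}rez framework directly; the $q$-periodic numerator perturbation and the arithmetic progression restriction only modify the standard Khintchine estimates by explicit constants. For $q\in\N$ with $q\equiv z\pmod y$, set
\[
E_q := \Bigl\{\beta\in\T : \nnorm{\beta - \tfrac{a+\gamma_{a,q}}{q}} < \tfrac{\psi(q)}{q}\ \text{for some } a\in\Z\Bigr\}.
\]
By the $q$-periodicity of $a\mapsto\epsilon_{a,q}$, the set $E_q$ is a union of at most $q$ arcs, one per residue class $a\pmod q$, with center $(a+\gamma_{a,q})/q$ lying within $(C_0+1)\psi(q)/q$ of the lattice point $a/q$. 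For $q$ large enough (say $\psi(q)\le(4C_0+4)^{-1}$, which holds eventually by monotonicity), consecutive arcs are disjoint and $\mathrm{m}(E_q)=2\psi(q)$. The set $W_{y,z}(\psi,\gamma)$ is exactly $\limsup_q E_q$ along $q\equiv z\pmod y$, so the goal becomes $\mathrm{m}(\limsup_q E_q)=1$.

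Two ingredients are then required. First, since $\psi$ is monotone decreasing, the congruence restriction does not destroy the divergence hypothesis: for the slowly growing $\Phi$ of Theorem~\ref{thm.KinchineMovingTarget}, a standard comparison (using that $\psi$ is monotone and $\Phi$ is slowly varying) gives $\sum_{q\equiv z\!\!\!\pmod y}\psi(q)/\Phi(q)=\infty$ whenever $\sum_q\psi(q)/\Phi(q)=\infty$. Second, one must verify the Michaud--Ram\'{\i}rez quasi-independence estimate
\[
\mathrm{m}(E_{q_1}\cap E_{q_2}) \,\lesssim\, \mathrm{m}(E_{q_1})\,\mathrm{m}(E_{q_2}),
\]
up to a diagonal correction controlled by the iterated logarithms in the divergence condition. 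The MR bounds rely only on how the centers $a_i/q_i$ distribute modulo $1$ via Farey/three-distance arguments, and shifting each center by an amount $O(\psi(q_i)/q_i)$ — at most comparable to the arc radius — affects the bounds by a multiplicative $(1+C_0)^2$ constant. The restriction $q\equiv z\pmod y$ merely reduces the number of admissible denominator pairs by a factor $y^{-2}$, absorbed in the implied constants.

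With divergence and quasi-independence in hand, the divergent Borel--Cantelli lemma yields $\mathrm{m}(\limsup_q E_q)>0$, and a zero-one law of Gallagher--Cassels type for $\limsup$-sets of Diophantine form upgrades this to full measure. The main technical step, and the one I expect to be most delicate, is verifying that the MR combinatorial bounds remain valid under the numerator-dependent perturbation: the concern is that the $\epsilon_{a,q}$ might conspire to cluster the perturbed centers $(a+\gamma_{a,q})/q$ in ways that inflate the intersections $E_{q_1}\cap E_{q_2}$. The $q$-periodicity of $a\mapsto\epsilon_{a,q}$ is decisive here, since it guarantees that for each $q$ the $q$ centers remain $1/q$-separated up to $O(\psi(q)/q)$ jitter, so that intersection counts between $E_{q_1}$ and $E_{q_2}$ are still governed by the coprime structure of $(q_1,q_2)$ exactly as in the fixed-target case of \cite{MR}.
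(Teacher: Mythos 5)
Your setup and the size estimate $\mathrm{m}(E_q)=2\psi(q)$ match the paper, as does the observation that $q$-periodicity keeps the perturbed centers $1/q$-separated up to $O(\psi(q)/q)$ jitter, so the overlap bounds are only distorted by a factor depending on $C_0$. Where the argument breaks down is the final step.

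You propose to obtain positive measure via the divergent Borel--Cantelli lemma and then ``upgrade'' to full measure via ``a zero-one law of Gallagher--Cassels type.'' No such zero-one law is available for moving targets with numerator- and denominator-dependent shifts $\gamma_{a,q}$. Cassels' and Gallagher's zero-one laws concern a \emph{fixed} inhomogeneous target $\gamma$, and their proofs exploit invariance of the limsup set under specific measure-preserving maps of $\T$; once $\gamma$ is allowed to vary with $q$ (let alone with $a$), that invariance fails and the laws simply do not apply. This is precisely why Michaud--Ram\'{\i}rez (and the paper here) do not invoke a zero-one law at all. Instead, they prove a quantitative density estimate $\mathrm{m}(E_q\cap U)\ge\tfrac12\,\mathrm{m}(E_q)\,\mathrm{m}(U)$ for every open $U$ (Lemma~\ref{lem:EqU-overlap}), apply the quantitative Chung--Erd\H{o}s-type Proposition~8 of \cite{MR} (Proposition~\ref{prop:all01}) to get $\mathrm{m}(W_{y,z}\cap U)\gg\mathrm{m}(U)^2$ for every open $U$, and then use the Beresnevich--Dickinson--Velani local-to-global lemma (Proposition~\ref{prop:BDV}) to conclude full measure. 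Your proposal omits the density-in-open-sets estimate entirely, and without it the local-to-global upgrade has no substitute.

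A secondary issue: you describe the overlap bound as $\mathrm{m}(E_{q_1}\cap E_{q_2})\lesssim\mathrm{m}(E_{q_1})\mathrm{m}(E_{q_2})$ with a ``diagonal correction'' left vague. The correction term in Lemma~\ref{lem:EqEr-overlap} is $\gcd(q,r)/q\cdot\mathrm{m}(E_q)$, and summing it over $r\le q$ yields $\eta(q)\le d(qy+z)$, which is why the divergence hypothesis must be pushed through a divisor-weighted series (Lemmas~\ref{lem:sum+lowerbound'}--\ref{lem:sum_psi_dn_infty}, using Selberg--Delange for $\sum 1/d(n)$ in progressions). This is where the iterated-logarithm slack in the hypothesis of Theorem~\ref{thm.KinchineMovingTarget} is actually spent; asserting the correction is ``controlled by the iterated logarithms'' skips the nontrivial analytic-number-theory input.
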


Lemma~\ref{lem:key} is obtained as a direct corollary of 
Theorem~\ref{thm.KinchineMovingTarget2} by taking
\[
    \psi(q) = \frac{c}{q},
    \qquad 
    \gamma_{a,q}
    = x - \frac{B}{q}\cos \Bigl(\frac{4\pi a}{q}\Bigr),
\]
and $(y,z)\in \{(2,0), (2,1)\}$,
which gives precisely the moving targets required for our application.
Note that a variant of a fixed target Khinchine inequality suffices for our application to Lemma~\ref{lem:key}, since our $\gamma_q\equiv x$ is a constant.

\begin{remark}[A short proof in the case $z=0$, suggested by Ram\'{\i}rez]\label{rem:Felipe}
Assume first $(y,z)=(1,0)$, and assume $\psi(q)\to 0$.
By \cite[Lemma~9]{Cassels57}, for any sequence of intervals whose lengths tend to $0$,
the Lebesgue measure of the associated limsup set is unchanged if all interval lengths are multiplied
by a fixed constant. In particular,
\[
\mathrm{m}\bigl(W_{1,0}(\psi,\gamma)\bigr)
=
\mathrm{m}\bigl(W_{1,0}((C_0+1)\psi,\gamma)\bigr).
\]
Moreover, if
\[
\nnorm{\beta-\tfrac{a+\gamma_q}{q}}<\tfrac{\psi(q)}{q},
\]
then using $\gamma_{a,q}=\gamma_q+\epsilon_{a,q}$ and $|\epsilon_{a,q}|\le C_0\psi(q)$ we have
\[
\nnorm{\beta-\tfrac{a+\gamma_{a,q}}{q}}
\le
\nnorm{\beta-\tfrac{a+\gamma_q}{q}}+\tfrac{|\epsilon_{a,q}|}{q}
<
\tfrac{(C_0+1)\psi(q)}{q}.
\]
Hence
\[
W(\psi,(\gamma_q))\subseteq W_{1,0}((C_0+1)\psi,\gamma).
\]
Since $\psi$ satisfies the divergence hypotheses of Theorem~\ref{thm.KinchineMovingTarget} and is decreasing,
the moving-target result of Michaud--Ram\'{\i}rez implies $\mathrm{m}(W(\psi,(\gamma_q)))=1$, and therefore
$\mathrm{m}(W_{1,0}((C_0+1)\psi,\gamma))=1$. The Cassels scaling lemma then yields
$\mathrm{m}(W_{1,0}(\psi,\gamma))=1$.

\medskip
Ram\'{\i}rez also noted a short argument for handling the restriction $q\equiv 0\pmod y$.
In the one-parameter case (targets depending only on $q$), one has
\[
W_{y,0}(\psi,(\gamma_q)) \;=\; T_y^{-1}\Bigl(W(\psi_y,(\gamma_{yq}))\Bigr),
\qquad T_y(\beta)=y\beta\pmod 1,\quad \psi_y(q):=\psi(yq),
\]
so $\mathrm{m}(W_{y,0}(\psi,(\gamma_q)))=\mathrm{m}(W(\psi_y,(\gamma_{yq})))$ since $T_y^{-1}$ is
measure-preserving. As $\psi$ is monotone and satisfies the divergence hypothesis of
Theorem~\ref{thm.KinchineMovingTarget}, the same holds for $\psi_y$, and hence
Michaud--Ram\'{\i}rez gives $\mathrm{m}(W_{y,0}(\psi,(\gamma_q)))=1$.
Finally, the perturbation bound $|\epsilon_{a,q}|\le C_0\psi(q)$ implies
$W_{y,0}(\psi,(\gamma_q))\subseteq W_{y,0}((C_0+1)\psi,\gamma)$, and Cassels' scaling lemma
\cite[Lemma~9]{Cassels57} (applied to the subsequence $q\equiv 0\pmod y$) yields
$\mathrm{m}(W_{y,0}(\psi,\gamma))=1$.

We do not pursue an analogue of this shortcut for $z\neq 0$ here.
\end{remark}

\begin{proof}[Proof of Theorem \ref{thm.KinchineMovingTarget2}]
We apply the criterion of Michaud--Ram\'{\i}rez (Proposition~\ref{prop:all01} below) to the
sets appearing in the limsup description of $W_{y,z}(\psi,\gamma)$, and then use the
local-to-global lemma of Beresnevich--Dickinson--Velani (Proposition~\ref{prop:BDV}) to upgrade
a quantitative lower bound on \linebreak $\mathrm{m}(W_{y,z}(\psi,\gamma)\cap U)$ for every nonempty open
set $U$ to the full-measure conclusion. Concretely, we verify the hypotheses of
Proposition~\ref{prop:all01} by establishing: a size estimate for $E_q$, a pairwise intersection
bound for $E_q\cap E_r$, a uniform density estimate for $E_q\cap U$, and finally the divergence
condition \eqref{eq:divergence}.

We may assume that $\psi(q)\to 0$ as $q\to\infty$; otherwise $\psi(q)\ge c>0$ for all large $q$,
in which case $E_q=\T$ for all large $q$ and $\mathrm{m}(W_{y,z}(\psi,\gamma))=1$ is immediate.

\medskip
\noindent\textbf{Setup.}
Write
\[
W_{y,z}(\psi,\gamma)=\limsup_{q\to\infty}E_{qy+z}
=\bigcap_{Q=1}^{\infty}\;\bigcup_{q\ge Q} E_{qy+z},
\]
where
\[
E_q=\bigcup_{a=0}^{q-1} I_{q,a},\qquad
I_{q,a}:=\Bigl\{\beta \in\T:\ \nnorm{\beta- \tfrac{a+\gamma_{a,q}}{q}}
        < \tfrac{\psi(q)}{q} \Bigr\}.
\]
Recall that $\gamma_{a,q}=\gamma_q+\epsilon_{a,q}$ with $|\epsilon_{a,q}|\le C_0\psi(q)$ and
$\epsilon_{a+q,q}=\epsilon_{a,q}$ for all $a\in\Z$, $q\in\N$. In particular, for each fixed $q$ the
quantity $\gamma_{a,q}$ depends only on $a\bmod q$, so in the definition of $W_{y,z}(\psi,\gamma)$
it suffices to restrict to $0\le a\le q-1$, which yields the above representation in terms of $E_q$.

\medskip
\noindent\textbf{Size of the basic sets.}

\begin{lemma}[Size of the basic sets]\label{lem:Eq-size}
For all sufficiently large $q$,
\[
\mathrm{m}(E_q)=2\psi(q).
\]
\end{lemma}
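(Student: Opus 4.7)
Since $E_q$ is a union of $q$ arcs on $\T$, each of length $2\psi(q)/q$ (for $q$ large enough that $\psi(q)<1/2$), the sum of their individual lengths equals exactly $2\psi(q)$. Hence the claim reduces to showing that for all sufficiently large $q$ the arcs $I_{q,a}$, $a=0,1,\dots,q-1$, are pairwise disjoint on $\T$, so that no length is wasted by overlap.

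\textbf{Key steps.} First, I would write each center of $I_{q,a}$ as
\[
\frac{a+\gamma_{a,q}}{q}
=
\frac{\gamma_q}{q}+\frac{a+\epsilon_{a,q}}{q}\pmod 1,
\]
and absorb the global shift $\gamma_q/q$ into a rotation of $\T$, which preserves Lebesgue measure and does not affect disjointness. This reduces the problem to the perturbed arithmetic progression $\{a/q+\epsilon_{a,q}/q:a=0,\dots,q-1\}$ with $|\epsilon_{a,q}|\le C_0\psi(q)$. Next, for any two indices $0\le a<a'\le q-1$, the toroidal distance between the corresponding centers is at least
\[
\frac{|a'-a|}{q}-\frac{|\epsilon_{a',q}-\epsilon_{a,q}|}{q}
\ge
\frac{1}{q}-\frac{2C_0\psi(q)}{q},
\]
and the analogous estimate handles the wrap-around pair $(a,a')=(0,q-1)$, whose toroidal distance going the short way around is $1/q-(\epsilon_{q-1,q}-\epsilon_{0,q})/q$. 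Disjointness of the arcs (each of radius $\psi(q)/q$) then amounts to
\[
\frac{1}{q}-\frac{2C_0\psi(q)}{q}>\frac{2\psi(q)}{q},
\qquad\text{i.e.}\qquad \psi(q)<\frac{1}{2(C_0+1)},
\]
which holds for all sufficiently large $q$ since we have already reduced to the case $\psi(q)\to 0$.

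\textbf{Conclusion.} Once pairwise disjointness is in hand, additivity of Lebesgue measure gives
\[
\mathrm{m}(E_q)
=\sum_{a=0}^{q-1}\mathrm{m}(I_{q,a})
=q\cdot\frac{2\psi(q)}{q}
=2\psi(q),
\]
as claimed.

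\textbf{Main obstacle.} The argument is essentially a one-line geometric observation once the torus is unrolled. The only point requiring a moment of care is that the perturbations $\epsilon_{a,q}$ depend on $a$, so that the centers are not exactly an arithmetic progression; however, the $q$-periodicity of $a\mapsto\epsilon_{a,q}$ and the uniform bound $|\epsilon_{a,q}|\le C_0\psi(q)$ ensure that the perturbation is negligible compared with the natural gap $1/q$ for large $q$, which is why the preliminary reduction to $\psi(q)\to 0$ is the crux of the matter.
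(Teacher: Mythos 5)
Your proof is correct and takes essentially the same approach as the paper: both reduce the claim to pairwise disjointness of the arcs $I_{q,a}$, and both use the uniform bound $|\epsilon_{a,q}|\le C_0\psi(q)$ together with $\psi(q)\to 0$ to show the perturbed centers retain a toroidal separation exceeding $2\psi(q)/q$. The only cosmetic difference is that the paper phrases the disjointness step as a proof by contradiction via $\|(a-a')/q\|<1/q$, while you state the equivalent lower bound on the gap directly.
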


\begin{proof}
The upper bound is immediate since each $\mathrm{m}(I_{q,a})=\frac{2\psi(q)}{q}$.
For the lower bound it suffices to show that the intervals $I_{q,a}$ are pairwise disjoint
for $q$ large.

Suppose $I_{q,a}\cap I_{q,a'}\neq \emptyset$ with $a\not\equiv a'\pmod q$. Then
\[
\left\|\frac{a+\gamma_q+\epsilon_{a,q}}{q}-\frac{a'+\gamma_q+\epsilon_{a',q}}{q}\right\|
\leq \frac{2\psi(q)}{q}.
\]
By the triangle inequality,
\[
\left\|\frac{a-a'}{q}\right\|
\leq \frac{2\psi(q)+|\epsilon_{a,q}|+|\epsilon_{a',q}|}{q}
\leq (2C_0+2)\frac{\psi(q)}{q}<\frac{1}{q},
\]
for $q$ large enough (since $\psi(q)\to 0$). This contradicts $a\not\equiv a' \pmod q$.
Hence the $I_{q,a}$ are disjoint for large $q$, and $\mathrm{m}(E_q)=q\cdot \frac{2\psi(q)}{q}=2\psi(q)$.
\end{proof}

\medskip
\noindent\textbf{Pairwise overlap bounds.}

\begin{lemma}[Overlap estimates]  \label{lem:EqEr-overlap}
For all sufficiently large integers $q$ and all $1 \le r < q$, we have
\[
\mathrm{m}(E_q \cap E_r)\leq 4(C_0+1)\left(\mathrm{m}(E_q)\, \mathrm{m}(E_r)+\frac{\gcd(q,r)}{q}\mathrm{m}(E_q)\right).
\]
\end{lemma}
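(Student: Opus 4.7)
\textbf{Plan for Lemma~\ref{lem:EqEr-overlap}.}
The plan is to decompose the intersection into pairwise intersections of basic intervals, reduce the non-emptiness condition to a lattice condition modulo $qr$, and count the admissible pairs $(a,b)\in\{0,\dots,q-1\}\times\{0,\dots,r-1\}$ using the congruence structure modulo $d:=\gcd(q,r)$. The gcd term in the stated bound arises precisely because $bq\bmod r$ is confined to the subgroup $d\Z/r\Z$, whose size $r/d$ is what saves a factor of $d/r$ over the trivial count.

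First, I would bound
\[
\mathrm{m}(E_q\cap E_r)\le |\mathcal P|\cdot\tfrac{2\psi(q)}{q},\qquad
\mathcal P:=\bigl\{(a,b):I_{q,a}\cap I_{r,b}\neq\emptyset\bigr\},
\]
using that $\mathrm{m}(I_{q,a}\cap I_{r,b})\le \min(\mathrm{m}(I_{q,a}),\mathrm{m}(I_{r,b}))=2\psi(q)/q$, which holds since $\psi$ is monotone decreasing and $r<q$. Translating the non-emptiness condition to the common denominator $qr$ and absorbing the perturbations $\epsilon_{a,q},\epsilon_{b,r}$ via the triangle inequality together with $|\epsilon_{a,q}|\le C_0\psi(q)$ and $|\epsilon_{b,r}|\le C_0\psi(r)$, one sees that $(a,b)\in\mathcal P$ implies
\[
\min_{k\in\Z}\bigl|ar-bq+N-k\,qr\bigr|\le L,\qquad N:=r\gamma_q-q\gamma_r,\qquad L:=(C_0+1)\bigl(r\psi(q)+q\psi(r)\bigr).
\]

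Next I would count $\mathcal P$ in two layers. For fixed $b$, the residues $\{ar\bmod qr:a=0,\dots,q-1\}$ form an arithmetic progression of common difference $r$ inside $\Z/qr\Z$, so the number of admissible $a$ is at most $2L/r+1$. For $b$ to be admissible at all, $bq-N$ must lie within distance $L$ of the lattice $r\Z$; but $\{bq\bmod r:b=0,\dots,r-1\}=d\Z/r\Z$, with each element attained exactly $d$ times, so the number of admissible $b$ is at most $\min(r,\,2L+d)$. Combining these,
\[
|\mathcal P|\le \min(r,\,2L+d)\,\bigl(2L/r+1\bigr).
\]

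Finally, I would translate this back into the form of the lemma. Monotonicity of $\psi$ together with $r\le q$ gives $r\psi(q)\le q\psi(r)$, hence $L\le 2(C_0+1)\,q\psi(r)$. A dichotomy on the size of $2L/r$ handles the two regimes uniformly: in the \emph{dense} case $2L\ge r$ one has $r\le 4(C_0+1)\,q\psi(r)$, so the additive $r$ is absorbed into a multiple of $q\psi(r)$ and produces the $\mathrm{m}(E_q)\mathrm{m}(E_r)$ term; in the \emph{sparse} case $2L<r$ the per-$b$ count collapses to $1$ and the gcd contribution $d$ emerges from the bound $2L+d$. Either way, $|\mathcal P|\le 8(C_0+1)\,q\psi(r)+4(C_0+1)\,d$, and multiplying by $2\psi(q)/q$ and identifying $\mathrm{m}(E_q)=2\psi(q)$, $\mathrm{m}(E_r)=2\psi(r)$ yields the claimed inequality. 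The main obstacle I anticipate is isolating the gcd gain in the second step without losing a factor of $r/d$: any weaker counting that treats all $b\in\{0,\dots,r-1\}$ on equal footing would replace $d$ by $r$ in the second term, which would be fatal for the quasi-independence required in the Borel--Cantelli-type argument that follows.
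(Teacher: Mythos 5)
Your proof is correct and follows the same strategy as the paper's: bound $\mathrm{m}(E_q\cap E_r)$ by the number of overlapping pairs times the smaller interval length $2\psi(q)/q$, absorb the $\epsilon$-perturbations via the triangle inequality, clear denominators to reduce nonemptiness to a condition on $ar-bq$ modulo $qr$, and exploit the divisibility $\gcd(q,r)\mid(ar-bq)$ to extract the gcd gain. The only difference is in the bookkeeping of the count: the paper enumerates the roughly $qr\Delta/\gcd(q,r)+1$ admissible values of $ar-bq$ and multiplies by the at most $\gcd(q,r)$ pairs realizing each value, whereas you iterate over $b$, using that $\{bq\bmod r\}$ is confined to $\gcd(q,r)\Z/r\Z$ with multiplicity $\gcd(q,r)$ and then counting the at most $2L/r+1$ admissible $a$ per $b$; these are dual slicings of the same lattice count and lead to the identical bound with the constant $4(C_0+1)$.
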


\begin{proof}
We use the notation of \cite{MR}. Let
\[
\Delta:=2\max\Bigl(\frac{\psi(q)}{q}, \frac{\psi(r)}{r}\Bigr), 
\qquad 
\delta:=2\min\Bigl(\frac{\psi(q)}{q},\frac{\psi(r)}{r}\Bigr).
\]

For $q,r$ sufficiently large, Lemma~\ref{lem:Eq-size} implies the families $\{I_{q,a}\}_a$
and $\{I_{r,b}\}_b$ are pairwise disjoint. Thus if $\beta\in E_q\cap E_r$, then there exist unique
$0\le a\le q-1$ and $0\le b\le r-1$ such that $\beta\in I_{q,a}\cap I_{r,b}$.
Moreover,
\[
\mathrm{m}(I_{q,a}\cap I_{r,b})\le \delta.
\]
Hence it suffices to bound the number of pairs $(a,b)$ such that $I_{q,a}\cap I_{r,b}\neq\varnothing$.

If $I_{q,a} \cap I_{r,b} \neq \varnothing$, then the centres satisfy
\begin{align} \label{eq:solution01}
\left\|
\frac{a+\gamma_q+\epsilon_{a,q}}{q} 
- \frac{b + \gamma_r+\epsilon_{b,r}}{r}
\right\|
\le
\frac{\psi(q)}{q} + \frac{\psi(r)}{r}.
\end{align}
By the triangle inequality this implies
\begin{align} \label{eq:solution01'}
\left\|
\frac{a+\gamma_q}{q} 
- \frac{b + \gamma_r}{r}
\right\|
\le\;
&\frac{\psi(q)+|\epsilon_{a,q}|}{q} + \frac{\psi(r)+|\epsilon_{b,r}|}{r}\notag\\
\le\; 
&(C_0+1)\left(\frac{\psi(q)}{q} + \frac{\psi(r)}{r}\right)\le (C_0+1)\Delta.
\end{align}

Since $\frac{a}{q}-\frac{b}{r}\in(-1,1)$ for $0\le a\le q-1$ and $0\le b\le r-1$,
there exists an integer $k$ (depending only on $q,r$ up to at most two choices) such that
\[
\left|
\frac{a+\gamma_q}{q} 
- \frac{b + \gamma_r}{r}
- k \right| \le (C_0+1)\Delta,
\]
equivalently,
\begin{align}\label{eq:ar-bq_1}
|(ar-bq)-kqr+(r\gamma_q-q\gamma_r)|<(C_0+1)qr\Delta.
\end{align}
For each such $k$, since $\gcd(q,r)\mid (ar-bq)$, there are at most
\[
2(C_0+1)\frac{qr\Delta}{\gcd(q,r)}+1
\]
admissible values of $(ar-bq)$. For each admissible value, there are at most $\gcd(q,r)$ solutions $(a,b)$.
Therefore, the total number of pairs $(a,b)$ with $I_{q,a}\cap I_{r,b}\neq\varnothing$ is at most
\[
4(C_0+1)\left(qr\Delta+\gcd(q,r)\right).
\]
Hence
\begin{align}
\mathrm{m}(E_q\cap E_r)
&\le 4(C_0+1)\left(qr\Delta+\gcd(q,r)\right)\delta\\
&\le 4(C_0+1)\left(4\psi(q)\psi(r)+\frac{\gcd(q,r)\cdot 2\psi(q)}{q}\right)\\
&\le 4(C_0+1)\left(\mathrm{m}(E_q)\, \mathrm{m}(E_r)+\frac{\gcd(q,r)}{q}\mathrm{m}(E_q)\right),
\end{align}
using Lemma~\ref{lem:Eq-size} in the last line.
\end{proof}

\medskip
\noindent\textbf{Density in open sets.}

\begin{lemma}[Open Set Overlap] \label{lem:EqU-overlap}
For every nonempty open set $U \subset \T$, there exists $q_0 := q_0(U) > 0$ such that
\[
\mathrm{m}(E_q \cap U) \;\ge\; \tfrac{1}{2}\,\mathrm{m}(E_q)\,\mathrm{m}(U)
\quad\text{for all } q \ge q_0.
\]
\end{lemma}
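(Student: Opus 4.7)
The plan is to first establish the estimate when $U$ is an open interval, using the near-regular spacing of the components of $E_q$, and then pass to arbitrary open sets via a finite-disjoint-interval exhaustion. As the proof of Theorem~\ref{thm.KinchineMovingTarget2} already assumes $\psi(q)\to 0$ (otherwise $E_q=\T$ for all large $q$ and the bound is trivial), I work under this hypothesis throughout.

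\emph{Interval case.} Fix an open interval $J = (c,d) \subset \T$ of length $L$. The components $I_{q,a}$ of $E_q$ are centered at $c_{q,a} = (a+\gamma_q+\epsilon_{a,q})/q \pmod 1$, each within distance $C_0\psi(q)/q$ of the lattice point $\tilde c_{q,a} = (a+\gamma_q)/q \pmod 1$. The points $\{\tilde c_{q,a}\}_{a=0}^{q-1}$ form a translate of $\frac{1}{q}\Z$ on $\T$, hence are equally spaced at gap $1/q$, so $J$ contains at least $\lfloor qL\rfloor$ of them. Since each $I_{q,a}$ has radius $\psi(q)/q$, any lattice point $\tilde c_{q,a}\in J$ at distance exceeding $(C_0+1)\psi(q)/q$ from $\partial J$ forces $I_{q,a}\subset J$; because $\psi(q)\to 0$, this boundary exclusion discards only $O_{C_0}(1)$ lattice points. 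Combined with the pairwise disjointness from Lemma~\ref{lem:Eq-size}, this yields
\[
\mathrm{m}(E_q\cap J) \;\ge\; (qL-C)\cdot \frac{2\psi(q)}{q} \;=\; \mathrm{m}(E_q)\,\mathrm{m}(J) \;-\; \frac{C\,\mathrm{m}(E_q)}{q}
\]
for a constant $C=C(C_0)$ and all sufficiently large $q$. Taking $q\ge q_1(J):=4C/L$ then gives $\mathrm{m}(E_q\cap J)\ge \tfrac{3}{4}\mathrm{m}(E_q)\,\mathrm{m}(J)$.

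\emph{Passage to general $U$.} Decompose $U=\bigsqcup_{i} J_i$ into at most countably many disjoint open intervals and select finitely many $J_1,\dots,J_N$ with $\sum_{i=1}^N \mathrm{m}(J_i)\ge \tfrac{2}{3}\mathrm{m}(U)$. Set $q_0(U):=\max_{1\le i\le N}q_1(J_i)$. For $q\ge q_0(U)$, applying the interval estimate to each $J_i$ and summing the disjoint contributions gives
\[
\mathrm{m}(E_q\cap U) \;\ge\; \sum_{i=1}^N \mathrm{m}(E_q\cap J_i) \;\ge\; \tfrac{3}{4}\mathrm{m}(E_q)\sum_{i=1}^N \mathrm{m}(J_i) \;\ge\; \tfrac{1}{2}\mathrm{m}(E_q)\,\mathrm{m}(U).
\]
The only delicate step is the boundary count in the interval case: although the perturbations $\epsilon_{a,q}$ can nudge a center across $\partial J$, they are uniformly bounded by $C_0\psi(q)$ and $\gamma_q$ is common to every $a$, so the centers remain in quasi-regular position and the boundary deficit is $O_{C_0}(1)$ independently of $q$. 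This is what makes the constant $C$ above depend only on $C_0$, so that $q_0(U)$ depends on $U$ only through the lengths of finitely many of its component intervals.
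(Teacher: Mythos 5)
Your proof is correct and follows essentially the same strategy as the paper's: treat an interval first by observing the unperturbed centers $(a+\gamma_q)/q$ form a $1/q$-spaced lattice on $\T$, show that the uniformly small perturbations $\epsilon_{a,q}$ can only push $O(1)$ of the intervals $I_{q,a}$ across the boundary, and then pass to general open $U$ via a finite interior exhaustion by disjoint intervals, losing a factor close to $1$. The only differences are bookkeeping (you count via $qL - C$ while the paper uses $\frac{N-2}{N+1}$ with $N$ the number of lattice points in $I$), and you make the resulting $q$-threshold $q_1(J) = 4C/L$ explicit.
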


\begin{proof}
First assume that $U=I$ is an open interval.
Let $\Gamma_{q}:=(\Z+\gamma_q)/q$ mod $1$ and set $N:=\#(\Gamma_q\cap I)$.
Enumerate the points of $\Gamma_q\cap I$ in increasing order as
\begin{align}\label{eq:order_Gamma_q_cap_I}
0\leq \inf_{x\in I} x< \frac{a_1+\gamma_q}{q}-w_1<\cdots <\frac{a_N+\gamma_q}{q}-w_N<\sup_{x\in I} x<1,
\end{align}
with integers $w_j$. These points are $1/q$-spaced. Since $|\epsilon_{a,q}|\le C_0\psi(q)$ and $\psi(q)\to 0$,
the ordering remains unchanged after the perturbation for $q$ large enough:
\[
\frac{a_1+\gamma_q+\epsilon_{a_1,q}}{q}-w_1<\cdots <\frac{ a_N+\gamma_q+\epsilon_{a_N,q}}{q}-w_N.
\]
Moreover, for $q$ large enough we have
\begin{align}
\frac{a_2+\gamma_q+\epsilon_{a_2,q}-\psi(q)}{q}-w_2
&\ge \frac{a_2+\gamma_q-(C_0+1)\psi(q)}{q}-w_2\\
&\ge \frac{a_1+\gamma_q}{q}-w_1+\frac{1-(C_0+1)\psi(q)}{q}
\ge \frac{a_1+\gamma_q}{q}-w_1,
\end{align}
and similarly
\[
\frac{a_{N-1}+\gamma_q+\epsilon_{a_{N-1},q}+\psi(q)}{q}-w_{N-1}
<\frac{a_N+\gamma_q}{q}-w_N.
\]
Therefore, for $2\le j\le N-1$,
\[
\left(\frac{a_j+\gamma_q+\epsilon_{a_j,q}-\psi(q)}{q}-w_j,\,
      \frac{a_j+\gamma_q+\epsilon_{a_j,q}+\psi(q)}{q}-w_j\right)\subset I,
\]
and hence
\begin{align}\label{eq:Eq_cap_I}
\mathrm{m}(E_q\cap I)\ge (N-2)\cdot \frac{2\psi(q)}{q}.
\end{align}
From \eqref{eq:order_Gamma_q_cap_I} we also have
\begin{align}\label{eq:mI}
\mathrm{m}(I)\le (N+1)\cdot \frac{1}{q}.
\end{align}
Combining \eqref{eq:Eq_cap_I}, \eqref{eq:mI}, and Lemma~\ref{lem:Eq-size} gives, for $q$ large enough,
\[
\mathrm{m}(E_q\cap I)\ge \frac{N-2}{N+1}\,\mathrm{m}(E_q)\,\mathrm{m}(I)\ge \frac{3}{4}\,\mathrm{m}(E_q)\,\mathrm{m}(I),
\]
which implies the desired inequality (with $\frac12$) for intervals. The general case follows by approximating
an open set $U$ from inside by a finite disjoint union of open intervals.
\end{proof}

\medskip
\noindent\textbf{Divergence.}
To complete the verification of the hypotheses of Proposition~\ref{prop:all01} (stated below),
it remains to check the divergence condition \eqref{eq:divergence}. In our setting, the
parameter $\eta(q)$ arising from the overlap bound \eqref{eq:pairwise} can be bounded above by a
quantity of divisor type (ultimately by $d(qy+z)$), and hence \eqref{eq:divergence} reduces to
the divergence of a divisor-weighted series. The next two lemmas supply precisely the required
divergence estimate.

\begin{lemma}\label{lem:sum+lowerbound'}
Let $y\ge 1$ be fixed and let $z$ be an integer. Let $d(n)$ denote the number of
divisors of $n$, and let $f:[1,\infty)\to[1,\infty)$ be an increasing function.
Then, for all $x$ sufficiently large, one has
\[
\sum_{\substack{n\le x\\ n\equiv z\!\!\!\pmod y}}
\frac{1}{f(\log d(n))\,d(n)}
\ \gg_y\
\frac{x}{f((\log 2)\log\log x)\,\sqrt{\log x}} .
\]
\end{lemma}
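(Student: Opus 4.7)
The plan is to strip the $f$-factor via monotonicity and then invoke a Ramanujan--Wilson type lower bound for $\sum 1/d(n)$ on the arithmetic progression $z\pmod y$.

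Set $T := (\log x)^{\log 2}$, so that $\log T = (\log 2)\log\log x$. For $n$ with $d(n)\le T$, monotonicity of $f$ gives $f(\log d(n))\le f((\log 2)\log\log x)$, whence
\[
\sum_{\substack{n\le x\\ n\equiv z\!\!\!\pmod y}}\frac{1}{f(\log d(n))\,d(n)}
\ \ge\ \frac{1}{f((\log 2)\log\log x)}
\sum_{\substack{n\le x,\ n\equiv z\!\!\!\pmod y\\ d(n)\le T}}\frac{1}{d(n)}.
\]
It therefore suffices to show the restricted AP sum on the right is $\gg_y x/\sqrt{\log x}$.

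The first ingredient I would establish is the unrestricted AP asymptotic
\[
S(x;y,z) := \sum_{\substack{n\le x\\ n\equiv z\!\!\!\pmod y}}\frac{1}{d(n)}\ \asymp_y\ \frac{x}{\sqrt{\log x}}.
\]
By Ramanujan--Wilson, $\sum_{n\le x} 1/d(n) \sim C x/\sqrt{\log x}$: the Dirichlet series factors as $\sum_n 1/(d(n)n^s) = \zeta(s)^{1/2} G(s)$ with $G$ holomorphic and non-vanishing in $\mathrm{Re}(s)>1/2$ (from $\sum_{k\ge 0} u^{k}/(k+1) = -\log(1-u)/u$ inside the Euler product), and a Selberg--Delange / Tauberian argument yields the asymptotic. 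For the AP refinement with $\gcd(y,z)=1$, twist by Dirichlet characters mod $y$: the principal character produces the main term $\asymp_y x/\sqrt{\log x}$, while for non-principal $\chi$ the corresponding series has the form $L(s,\chi)^{1/2}(\cdots)$ which is holomorphic at $s=1$ and hence contributes $o(x/\sqrt{\log x})$. For general $(y,z)$, set $g=\gcd(y,z)$ and write $n=gn'$; the congruence becomes $n'\equiv z/g\!\!\!\pmod{y/g}$ with $\gcd(z/g,y/g)=1$, and submultiplicativity $d(gn')\le d(g)d(n')$ loses only the constant factor $1/d(g)$, reducing to the coprime case.

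The second ingredient is the tail bound. Trivially,
\[
\sum_{\substack{n\le x\\ d(n)>T}}\frac{1}{d(n)} \ \le\ \frac{x}{T} \ =\ \frac{x}{(\log x)^{\log 2}} \ =\ o\!\left(\frac{x}{\sqrt{\log x}}\right),
\]
since $\log 2 > 1/2$. Subtracting this tail from $S(x;y,z)$ preserves the order $\gg_y x/\sqrt{\log x}$, which completes the argument. The technical core is the Selberg--Delange-type asymptotic for $\sum 1/d(n)$ on arithmetic progressions (in particular, verifying positivity of the leading constant and the smaller order of non-principal twists); once that is granted, the monotonicity reduction and the tail estimate are immediate, the latter precisely because $\log 2 > 1/2$ makes the ``large $d(n)$'' contribution strictly smaller than the Ramanujan--Wilson scale.
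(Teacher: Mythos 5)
Your proof is correct and follows essentially the same route as the paper's: you split at the threshold $T=(\log x)^{\log 2}$ (equivalently, $\log_2 d(n)\le\log\log x$), pull out $f$ by monotonicity, trivially bound the large-$d(n)$ tail by $x/T=x/(\log x)^{\log 2}=o(x/\sqrt{\log x})$ using $\log 2>1/2$, and reduce the AP sum $\sum 1/d(n)$ to the coprime case via $n=gn'$ and submultiplicativity before invoking Selberg--Delange. The only difference is that you sketch the proof of the Selberg--Delange input (Euler product, $\zeta(s)^{1/2}$ factorization, character twists), which the paper simply cites.
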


\begin{proof}[Proof of Lemma \ref{lem:sum+lowerbound'}]
Define
\[
A_{y,z}(x)
:= \Bigl\{1\le n\le x:\ n\equiv z \!\!\!\pmod y,\ 
\log_2 d(n)\le \log\log x\Bigr\},
\]
where $\log_2$ denotes the logarithm to base $2$.  Since $f$ is increasing and
$\log d(n) = (\log 2)\,\log_2 d(n)$, we have $\log d(n)\le (\log 2)\log\log x$
for all $n\in A_{y,z}(x)$, and hence
\begin{align}\label{eq:Ay-lower}
\sum_{\substack{n\le x\\ n\equiv z\!\!\!\pmod y}}
\frac{1}{f(\log d(n))\,d(n)}
&\ge
\sum_{n\in A_{y,z}(x)}
\frac{1}{f(\log d(n))\,d(n)} \notag\\
&\ge
\frac{1}{f((\log 2)\log\log x)}
\sum_{n\in A_{y,z}(x)} \frac{1}{d(n)} .
\end{align}

Next we compare the $1/d(n)$--mass of $A_{y,z}(x)$ with that of the whole
progression. If $n\notin A_{y,z}(x)$, then $\log_2 d(n)>\log\log x$, hence
$d(n)>(\log x)^{\log 2}$ and therefore
\[
\frac{1}{d(n)} \le (\log x)^{-\log 2}.
\]
Consequently,
\begin{align}\label{eq:complement}
\sum_{\substack{n\le x\\ n\equiv z\!\!\!\pmod y\\ n\notin A_{y,z}(x)}} \frac{1}{d(n)}
\ \le\
\#\Bigl\{n\le x:\ n\equiv z\!\!\!\pmod y\Bigr\}(\log x)^{-\log 2}
\ \ll_y\ \frac{x}{(\log x)^{\log 2}}.
\end{align}

On the other hand, we have the following uniform lower bound for the full sum:
\begin{equation}\label{eq:1overd-lower}
\sum_{\substack{n\le x\\ n\equiv z\!\!\!\pmod y}} \frac{1}{d(n)}
\ \gg_y\ \frac{x}{\sqrt{\log x}}.
\end{equation}
Indeed, letting $g=(y,z)$ and writing $y=g y_1$, $z=g z_1$ with $(y_1,z_1)=1$, the
congruence $n\equiv z\pmod y$ is equivalent to $n=g m$ with $m\equiv z_1\pmod{y_1}$,
so that
\[
\sum_{\substack{n\le x\\ n\equiv z\!\!\!\pmod y}} \frac{1}{d(n)}
=
\sum_{\substack{m\le x/g\\ m\equiv z_1\!\!\!\pmod{y_1}}} \frac{1}{d(gm)}.
\]
Since $g\mid y$, the divisor function is submultiplicative and gives
$d(gm)\le d(g)\,d(m)$.
Therefore
\[
\sum_{\substack{n\le x\\ n\equiv z\!\!\!\pmod y}} \frac{1}{d(n)}
=
\sum_{\substack{m\le x/g\\ m\equiv z_1\!\!\!\pmod{y_1}}} \frac{1}{d(gm)}
\ \ge\
\frac{1}{d(g)}\sum_{\substack{m\le x/g\\ m\equiv z_1\!\!\!\pmod{y_1}}} \frac{1}{d(m)}.
\]
For fixed $y_1$ and $(y_1,z_1)=1$, Selberg--Delange theory in arithmetic
progressions applied to $1/d(m)$ yields
\[
\sum_{\substack{m\le X\\ m\equiv z_1\!\!\!\pmod{y_1}}} \frac{1}{d(m)}
\ \asymp_{y_1}\ \frac{X}{\sqrt{\log X}},
\]
so taking $X=x/g$ gives, for $x$ sufficiently large,
\[
\sum_{\substack{n\le x\\ n\equiv z\!\!\!\pmod y}} \frac{1}{d(n)}
\ \gg_{y_1}\ \frac{1}{d(g)}\cdot \frac{x/g}{\sqrt{\log(x/g)}}
\ =\ \frac{x}{g\,d(g)\,\sqrt{\log(x/g)}}.
\]
Since $y$ is fixed and $y_1$ ranges over divisors of $y$, we may absorb the
dependence on $y_1$ into the implied constant and write
\[
\sum_{\substack{n\le x\\ n\equiv z\!\!\!\pmod y}} \frac{1}{d(n)}
\ \gg_{y}\ \frac{x}{g\,d(g)\,\sqrt{\log(x/g)}}.
\]
Finally, using $g\le y$, $d(g)\le d(y)$, and $\sqrt{\log(x/g)}\asymp_y \sqrt{\log x}$
(for $x$ sufficiently large, depending on $y$), we obtain
\[
\sum_{\substack{n\le x\\ n\equiv z\!\!\!\pmod y}} \frac{1}{d(n)}
\ \gg_{y}\ \frac{x}{\sqrt{\log x}},
\]
which is \eqref{eq:1overd-lower}.

Combining \eqref{eq:complement} with \eqref{eq:1overd-lower} and using $\log 2>1/2$, we obtain that for all $x$ sufficiently large,
\[
\sum_{n\in A_{y,z}(x)}\frac{1}{d(n)}
\ \gg_y\ \frac{x}{\sqrt{\log x}}.
\]
Substituting this into \eqref{eq:Ay-lower} completes the proof.
\end{proof}

\medskip
\noindent
We now choose a specific increasing function $F$ arising from Theorem~\ref{thm.KinchineMovingTarget} and use Lemma~\ref{lem:sum+lowerbound'} to transfer divergence from a logarithmic weight to a divisor weight.

Fix $\varepsilon>0$ and an integer $k\ge 2$, and define
\[
F(x)\ :=\ x(\log x)(\log\log x)\cdots \bigl(\log^{(k-2)}x\bigr)^{1+\varepsilon},
\]
where $\log^{(m)}$ denotes the $m$-fold iterated logarithm. 

\begin{lemma}\label{lem:sum_psi_dn_infty}
For fixed $y\ge 1$ and integer $z$, one has
\[
\sum_{\substack{n\ge 1\\ n\equiv z\!\!\!\pmod y}}
\frac{\psi(n)}{F(\log d(n))\,d(n)}=\infty.
\]
\end{lemma}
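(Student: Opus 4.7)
The plan is to combine Lemma~\ref{lem:sum+lowerbound'} with an Abel summation argument that transfers the unweighted partial-sum bound into the $\psi$-weighted series.

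First, I would apply Lemma~\ref{lem:sum+lowerbound'} with $f=F$. Setting
\[
A(x):=\sum_{\substack{n\le x\\ n\equiv z\!\!\!\pmod y}}\frac{1}{F(\log d(n))\,d(n)},
\qquad
h(x):=F\bigl((\log 2)\log\log x\bigr)\sqrt{\log x},
\]
the lemma gives $A(x)\gg_y x/h(x)$ for all sufficiently large $x$. Substituting the definition $F(x)=x(\log x)(\log\log x)\cdots(\log^{(k-2)}x)^{1+\varepsilon}$ and using $(\log 2)\log\log x \asymp \log^{(2)} x$, each iterated logarithm inside $F$ shifts up by two, yielding
\[
h(x)\asymp \sqrt{\log x}\cdot (\log^{(2)}x)(\log^{(3)}x)\cdots(\log^{(k)}x)^{1+\varepsilon}.
\]
This is, up to a multiplicative constant, the denominator in the divergence hypothesis of Theorem~\ref{thm.KinchineMovingTarget}, so the hypothesis on $\psi$ becomes $\sum_{n\ge 1}\psi(n)/h(n)=\infty$.

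Second, I would pass from $A$ to the $\psi$-weighted sum by Abel summation. Since $\psi$ is non-increasing,
\[
\sum_{\substack{n\le N\\ n\equiv z\!\!\!\pmod y}}\frac{\psi(n)}{F(\log d(n))\,d(n)}
= \psi(N)\,A(N) + \sum_{n=1}^{N-1}\bigl(\psi(n)-\psi(n+1)\bigr)A(n).
\]
Inserting $A(n)\gg_y n/h(n)$ for $n\ge n_0$ and applying the same identity to the comparison quantity $B(N):=\sum_{n\le N}1/h(n)$, which satisfies $B(N)\asymp N/h(N)$ by integral comparison (since $h$ is slowly varying), I would arrive at
\[
\sum_{\substack{n\le N\\ n\equiv z\!\!\!\pmod y}}\frac{\psi(n)}{F(\log d(n))\,d(n)}
\;\gg_y\; \sum_{n\le N}\frac{\psi(n)}{h(n)} - O_y(1).
\]
Letting $N\to\infty$ and invoking the divergence of $\sum_n\psi(n)/h(n)$ then finishes the proof.

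The main step requiring care is the asymptotic identification: verifying that the quantity $h(x)=F((\log 2)\log\log x)\sqrt{\log x}$ produced by Lemma~\ref{lem:sum+lowerbound'} agrees, up to absolute constants, with the denominator appearing in the divergence hypothesis of Theorem~\ref{thm.KinchineMovingTarget}. Everything else, namely the slow-variation estimates for $h$, the integral comparison $B(N)\asymp N/h(N)$, and the Abel summation comparison that produces $\sum\psi(n)/[F(\log d(n))d(n)]\gg \sum\psi(n)/h(n)$, is routine once the matching is in place.
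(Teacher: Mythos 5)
Your proposal is correct and follows essentially the same route as the paper: both arguments rest on Lemma~\ref{lem:sum+lowerbound'} with $f=F$ to lower-bound the partial sums $A(x)$, and both transfer this to the $\psi$-weighted series via Abel summation using the monotonicity of $\psi$. The only organizational difference is that the paper compares $G(x)$ to a reference partial sum $H(x)$ restricted to the same progression and then separately checks that the restricted $\psi$-weighted reference series diverges (via eventual monotonicity of the summand), whereas you compare $A(n)$ directly to the unrestricted $B(n)\asymp n/h(n)$ and invoke divergence of the unrestricted series; this is the same argument modulo bookkeeping.
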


\begin{proof}[Proof of Lemma \ref{lem:sum_psi_dn_infty}]
As in \cite[Section~4]{MR}, one has
\begin{equation}\label{eq:18}
\sum_{\substack{n\le x\\ n\equiv z\!\!\!\pmod y}}
\frac{1}{F((\log 2)\log\log n)\sqrt{\log n}}
\ \asymp_y\
\frac{x}{F(\log\log x)\sqrt{\log x}}.
\end{equation}

Define the two nonnegative sequences
\[
g(n):=\frac{1}{F(\log d(n))\,d(n)},
\qquad
h(n):=\frac{1}{F((\log 2)\log\log n)\sqrt{\log n}},
\]
and their partial sums along the progression $n\equiv z\pmod y$,
\[
G(x):=\sum_{\substack{n\le x\\ n\equiv z\!\!\!\pmod y}} g(n),
\qquad
H(x):=\sum_{\substack{n\le x\\ n\equiv z\!\!\!\pmod y}} h(n).
\]
By Lemma~\ref{lem:sum+lowerbound'} with $f=F$, and using
$F((\log 2)\log\log x)\asymp F(\log\log x)$ for large $x$, we obtain
\[
G(x)\ \gg_y\ \frac{x}{F(\log\log x)\sqrt{\log x}}\ \asymp_y\ H(x)
\qquad (x\ \text{sufficiently large}).
\]
In particular, there exist $x_0$ and $c=c(y)>0$ such that
\begin{equation}\label{eq:GgecH}
G(x)\ \ge\ c\,H(x)\qquad\text{for all }x\ge x_0.
\end{equation}

Since $\psi$ is decreasing, Abel summation (applied to the sequence
$a_n:=g(n)\mathbf{1}_{n\equiv z\!\!\!\pmod y}$) gives, for every integer $N\ge 1$,
\begin{equation}\label{eq:Abel-g}
\sum_{\substack{n\le N\\ n\equiv z\!\!\!\pmod y}} \psi(n)\,g(n)
=
\psi(N)\,G(N)+\sum_{m< N}\bigl(\psi(m)-\psi(m+1)\bigr)\,G(m).
\end{equation}
Applying the same identity to $b_n:=h(n)\mathbf{1}_{n\equiv z\!\!\!\pmod y}$ yields
\begin{equation}\label{eq:Abel-h}
\sum_{\substack{n\le N\\ n\equiv z\!\!\!\pmod y}} \psi(n)\,h(n)
=
\psi(N)\,H(N)+\sum_{m< N}\bigl(\psi(m)-\psi(m+1)\bigr)\,H(m).
\end{equation}
Using \eqref{eq:GgecH} for all $m\ge x_0$ and the nonnegativity of
$\psi(m)-\psi(m+1)$, we obtain for all $N$ sufficiently large
\[
\sum_{\substack{n\le N\\ n\equiv z\!\!\!\pmod y}} \psi(n)\,g(n)
\ \ge\
c\sum_{\substack{n\le N\\ n\equiv z\!\!\!\pmod y}} \psi(n)\,h(n)
\;-\;O_{y,\psi}(1).
\]
In particular, there exist constants $c=c(y)>0$ and $C=C(y,z,\psi,F)>0$ such that for all
sufficiently large $N$,
\begin{equation}\label{eq:sumd>sumlogn}
\sum_{\substack{n\le N\\ n\equiv z\!\!\!\pmod y}}
\frac{\psi(n)}{F(\log d(n))\,d(n)}
\ \ge\ c
\sum_{\substack{n\le N\\ n\equiv z\!\!\!\pmod y}}
\frac{\psi(n)}{F((\log 2)\log\log n)\sqrt{\log n}}
\ -\ C.
\end{equation}

Since, by assumption,
\begin{equation}\label{eq:psi-full}
\sum_{q=1}^{\infty}\frac{\psi(q)}{F(\log\log q)\sqrt{\log q}}=\infty,
\end{equation}
and since $\psi$ is decreasing while $F(\log\log q)\sqrt{\log q}$ is increasing for $q$ sufficiently large,
the summand in \eqref{eq:psi-full} is eventually decreasing; hence restricting to any fixed residue class modulo $y$ preserves divergence. In particular,
\begin{equation}\label{eq:psi-assumption}
\sum_{\substack{q=1\\ q\equiv z\!\!\!\pmod y}}^{\infty}
\frac{\psi(q)}{F(\log\log q)\sqrt{\log q}}=\infty.
\end{equation}
Using again $F((\log 2)\log\log q)\asymp F(\log\log q)$, the right-hand side in \eqref{eq:sumd>sumlogn} has partial sums tending to $+\infty$ along $N\to\infty$, hence so does the left-hand side. This proves the claimed divergence.
\end{proof}

\medskip
\noindent
The next proposition (from \cite{MR}) converts the size/overlap estimates above together with the divergence in Lemma~\ref{lem:sum_psi_dn_infty} into a quantitative lower bound for $\mathrm{m}(W_{y,z}(\psi,\gamma)\cap U)$ for every open set $U$.

\begin{proposition}\cite[Proposition 8]{MR} \label{prop:all01}
For each $q \ge 1$, let $A_q \subset X$, where $X$ is a metric space with a finite measure $\mu$ such that every open set is $\mu$-measurable. Suppose $\eta : \mathbb{N}^2 \to [1,\infty)$ is such that
\begin{equation}\label{eq:pairwise}
\mu(A_q \cap A_r) \le \kappa\, \mu(A_q)\big(\mu(A_r) + \eta(q,r)\big)
\qquad (1 \le r \le q)
\end{equation}
for some $\kappa > 0$. Suppose $U \subset X$ is an open set and that
\begin{equation}\label{eq:densityU}
\mu(A_q \cap U) \ge \frac12 \mu(A_q)\mu(U)
\end{equation}
for all sufficiently large $q$. Let
\[
\eta(q) = \sum_{r \le q} \eta(q,r),
\]
and let $h : [1,\infty) \to [1,\infty)$ be increasing and such that
\begin{equation}\label{eq:h-condition}
h(2q) \ll h(q)
\qquad\text{and}\qquad
\sum_{\ell \ge 0} \frac{1}{h(2^\ell)} < \infty.
\end{equation}
If
\begin{equation}\label{eq:divergence}
\sum_{q=1}^\infty \frac{\mu(A_q)}{\eta(q)\, h(\eta(q))} = \infty,
\end{equation}
then
\[
\mu\big( \limsup_{q \to \infty} A_q \cap U \big) \ge \frac{\mu(U)^2}{4\kappa}.
\]
\end{proposition}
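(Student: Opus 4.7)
Since the proposition is attributed to Michaud--Ram\'{\i}rez \cite[Proposition 8]{MR}, the rigorous route is simply to quote their result; the approach I would take (and which appears to underlie their argument) is a Chung--Erd\H{o}s-type second-moment lower bound applied to the sets $B_q := A_q \cap U$, with the quasi-independence error $\eta(q,r)$ absorbed via a dyadic decomposition governed by the weight $h$.

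The first step is to apply the divergence Borel--Cantelli (Chung--Erd\H{o}s) lemma: for any measurable family $(B_q)$ with $\sum \mu(B_q)=\infty$,
\[
\mu\Bigl(\limsup_{q\to\infty} B_q\Bigr)
\;\ge\;
\limsup_{N\to\infty}
\frac{\bigl(\sum_{q\le N}\mu(B_q)\bigr)^{2}}
     {\sum_{q,r\le N}\mu(B_q\cap B_r)}.
\]
By \eqref{eq:densityU} the numerator is at least $\tfrac14\mu(U)^{2}\bigl(\sum_{q\le N}\mu(A_q)\bigr)^{2}$ for $N$ large, and by \eqref{eq:pairwise} the denominator splits as
$\kappa\bigl(\sum_{q\le N}\mu(A_q)\bigr)^{2} + 2\kappa\sum_{q\le N}\mu(A_q)\,\eta(q)$; the first piece alone already produces the desired ratio $\mu(U)^{2}/(4\kappa)$.

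To absorb the off-diagonal error $\sum_q \mu(A_q)\,\eta(q)$, I would partition the indices dyadically by $L_\ell := \{q : 2^\ell \le \eta(q) < 2^{\ell+1}\}$. The summability $\sum_\ell 1/h(2^\ell) < \infty$ from \eqref{eq:h-condition}, paired with the divergence hypothesis \eqref{eq:divergence} (using that $h$ is doubling, so $h(\eta(q)) \asymp h(2^\ell)$ on each $L_\ell$), forces
\[
\sum_\ell \frac{1}{2^\ell\, h(2^\ell)} \sum_{q\in L_\ell}\mu(A_q) \;=\; \infty,
\]
so by a pigeonhole/Abel summation argument one obtains $\sum_{q\in L_\ell}\mu(A_q)\gg 2^\ell h(2^\ell)$ along infinitely many $\ell$. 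Restricting the Chung--Erd\H{o}s argument to indices in such a good block, the error contribution $\sim 2^{\ell+1}\sum_{q\in L_\ell}\mu(A_q)$ is dominated by the square of the numerator, and the ratio still tends to $\mu(U)^{2}/(4\kappa)$.

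The main obstacle is precisely this dyadic bookkeeping: one must verify that the $h$-weighting in \eqref{eq:divergence}, combined with the summability $\sum_\ell 1/h(2^\ell)<\infty$, produces infinitely many blocks whose mass beats $2^\ell h(2^\ell)$, and then upgrade the block-wise Chung--Erd\H{o}s lower bounds to a single unconditional bound $\mu(U)^{2}/(4\kappa)$ for $\mu(\limsup B_q)$. Once this quantitative pigeonhole step is carried out, the rest is standard second-moment machinery combined with the density bound \eqref{eq:densityU}.
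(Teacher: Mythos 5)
The paper itself offers no proof of this proposition: it is quoted verbatim from Michaud--Ram\'{\i}rez \cite[Proposition~8]{MR}, and the authors' ``proof'' is the citation. Your sketch is a reasonable reconstruction of the kind of argument that underlies such a result (a Chung--Erd\H{o}s second-moment bound for $B_q=A_q\cap U$, with a dyadic partition in $\eta(q)$ to absorb the off-diagonal error, followed by the standard upgrade from unions over high index sets to the $\limsup$ via the finiteness of $\mu$ and downward continuity), and this is indeed in the spirit of what Michaud--Ram\'{\i}rez do. So there is no genuine mismatch of approach to report.

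One intermediate claim in your sketch is overstated, though. From
\[
\sum_{\ell}\frac{M_\ell}{2^{\ell}h(2^{\ell})}=\infty,
\qquad M_\ell:=\sum_{q\in L_\ell}\mu(A_q),
\qquad \sum_\ell \frac{1}{h(2^\ell)}<\infty,
\]
one does \emph{not} in general get $M_\ell\gg 2^{\ell}h(2^{\ell})$ along a subsequence: taking, say, $h(x)=(\log x)^2$ and $M_\ell\asymp 2^{\ell}\,\ell$ gives a divergent weighted sum with $M_\ell/(2^{\ell}h(2^{\ell}))\asymp 1/\ell\to 0$. The correct pigeonhole conclusion is the weaker statement $\limsup_\ell M_\ell/2^{\ell}=\infty$ (if $M_\ell\le C2^{\ell}$ for all large $\ell$, then $\sum_\ell M_\ell/(2^{\ell}h(2^{\ell}))\le C\sum_\ell 1/h(2^{\ell})<\infty$, a contradiction). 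Fortunately this weaker fact is exactly what your block-wise Chung--Erd\H{o}s estimate needs, since the error term in the denominator is $\asymp 2^{\ell}M_\ell$ and is dominated by the $\kappa M_\ell^2$ main term as soon as $M_\ell/2^{\ell}\to\infty$; so the argument goes through once this step is corrected. You also need to pass from finitely many blocks with $q\le N$ to $\mu(\limsup B_q)$; this is handled by first discarding $q\le Q$ (which preserves the divergence) and letting $Q\to\infty$, as you gesture at in your final paragraph.
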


\begin{proposition}[Beresnevich--Dickinson--Velani, {\cite[ Lemma~6]{BDV}}]\label{prop:BDV}
Let $(X,d)$ be a metric space with a finite measure $\mu$ such that every open set is $\mu$-measurable. Let $A$ be a Borel subset of $X$ and let $f : \mathbb{R}_{+} \to \mathbb{R}_{+}$ be an increasing function with $f(x) \to 0$ as $x \to 0$. If for every open set $U \subset X$ we have
\[
\mu(A \cap U) \ge f\big(\mu(U)\big),
\]
then
\[
\mu(A) = \mu(X).
\]
\end{proposition}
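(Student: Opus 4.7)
The plan is to argue by contradiction using outer regularity of the finite Borel measure $\mu$. Suppose $\mu(A) < \mu(X)$ and set $B := X \setminus A$, which is Borel with $\mu(B) > 0$. Since $\mu$ is a finite measure on a metric space and every open set is $\mu$-measurable (hence every Borel set is $\mu$-measurable), standard regularity theory for finite Borel measures on metric spaces yields outer regularity on Borel sets: for every $\eta > 0$ there exists an open set $U_\eta$ with $B \subseteq U_\eta \subseteq X$ and $\mu(U_\eta) \le \mu(B) + \eta$.

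The key observation is that $A \cap U_\eta = U_\eta \setminus B$, since $B \subseteq U_\eta$, and therefore
\[
\mu(A \cap U_\eta) \;=\; \mu(U_\eta) - \mu(B) \;\le\; \eta.
\]
Applying the standing hypothesis to this open set $U_\eta$, together with the monotonicity of $f$ and the inequality $\mu(U_\eta) \ge \mu(B) > 0$, I would then derive
\[
f(\mu(B)) \;\le\; f(\mu(U_\eta)) \;\le\; \mu(A \cap U_\eta) \;\le\; \eta.
\]
Since $\eta > 0$ is arbitrary, this forces $f(\mu(B)) = 0$, which contradicts the effective positivity of $f$ on $(0,\infty)$ in any case where the statement is nonvacuous (concretely, in the intended application of Proposition~\ref{prop:all01} with $f(t) = t^2/(4\kappa)$, $f$ is strictly positive for $t>0$). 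The contradiction forces $\mu(A) = \mu(X)$.

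The only step meriting any attention is the outer regularity input; this is classical for finite Borel measures on metric spaces (one first approximates closed sets from outside by their $1/n$-neighborhoods using continuity from above, and then upgrades to all Borel sets via the standard $\pi$-$\lambda$ argument), and in the paper's concrete applications $X = \T$ with Lebesgue measure, where outer regularity is entirely elementary. The hypothesis $f(x) \to 0$ as $x \to 0$ is not used in the argument; it serves only as a compatibility condition guaranteeing that the pointwise lower bound $\mu(A \cap U) \ge f(\mu(U))$ is not vacuous when $\mu(U)$ is small.
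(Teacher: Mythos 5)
Your argument is correct, and it is worth noting that the paper itself gives no proof of this proposition: it is quoted verbatim as an external input from Beresnevich--Dickinson--Velani \cite[Lemma~6]{BDV}, so there is no internal proof to compare against. Your route --- contradiction via outer regularity of the finite Borel measure (choose open $U_\eta\supseteq B:=X\setminus A$ with $\mu(U_\eta)\le\mu(B)+\eta$, note $A\cap U_\eta=U_\eta\setminus B$, and play the hypothesis $\mu(A\cap U_\eta)\ge f(\mu(U_\eta))\ge f(\mu(B))$ against $\mu(A\cap U_\eta)\le\eta$) --- is a complete and essentially the standard proof of the open-set formulation stated here; the original BDV lemma is phrased for balls and needs a covering argument, which the open-set version lets you bypass. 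Two small points: the regularity input is indeed classical (a finite measure on a metric space defined on a $\sigma$-algebra containing the open sets restricts to a regular Borel measure, exactly as you sketch), and the final contradiction rests on $f$ being strictly positive at the positive argument $\mu(B)$, which is the intended reading of $f:\mathbb{R}_+\to\mathbb{R}_+$ (and is what holds in the application $f(t)=t^2/(4\kappa)$); your hedge about ``effective positivity'' is the right observation, since if $f$ were allowed to vanish at some positive argument the statement would actually fail (e.g.\ $f(x)=\max(x-\tfrac12,0)$, $A=[0,\tfrac12]$ in $[0,1]$). You are also right that the hypothesis $f(x)\to0$ as $x\to0$ plays no role in the conclusion; it only keeps the hypothesis satisfiable for small $\mu(U)$.
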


Continuing the proof of Theorem \ref{thm.KinchineMovingTarget2}, put
\[
h(x)\ :=\ F(\log x).
\]
Clearly $h$ satisfies the conditions in \eqref{eq:h-condition}.

Now define, for each $q\ge 1$,
\[
A_q :=E_{qy+z}.
\]
Then $\limsup_{q\to\infty}A_q = W_{y,z}(\psi,\gamma)$.

Moreover, Lemma~\ref{lem:EqU-overlap} implies that for every nonempty open set $U\subset \T$,
\[
\mathrm{m}(A_q\cap U)\ \ge\ \tfrac12\,\mathrm{m}(A_q)\,\mathrm{m}(U)
\qquad\text{for all sufficiently large }q,
\]
so \eqref{eq:densityU} holds for the sequence $(A_q)$.

By Lemma~\ref{lem:EqEr-overlap}, for all sufficiently large $q$ and all $1\le r<q$ we have
\[
    \mathrm{m}(A_q\cap A_r)\leq 4(C_0+1)\left(\mathrm{m}(A_q)\, \mathrm{m}(A_r)+\frac{\gcd(qy+z,ry+z)}{qy+z}\mathrm{m}(A_q)\right).
\]
Hence the inequality \eqref{eq:pairwise} holds with $\kappa=4(C_0+1)$ and
\[
\eta(q,r):=\frac{\gcd(qy+z,ry+z)}{qy+z}.
\]

Therefore,
\[
\eta(q)=\sum_{1\le r\le q}\eta(q,r)
\leq \frac{1}{qy+z}\sum_{1\le m\le qy+z}\gcd(qy+z,m)
=\sum_{e\mid (qy+z)}\frac{\varphi(e)}{e}
\ \le\  d(qy+z).
\]
Since $h$ is increasing, we have
\[
h(\eta(q))\le h(d(qy+z))=F(\log d(qy+z)),
\]
and hence
\[
\eta(q)\,h(\eta(q))\le d(qy+z)\,F(\log d(qy+z)).
\]
Also, by Lemma~\ref{lem:Eq-size}, for all sufficiently large $q$,
\[
\mathrm{m}(A_q)=\mathrm{m}(E_{qy+z})=2\psi(qy+z).
\]
Consequently, for all sufficiently large $q$,
\[
\frac{\mathrm{m}(A_q)}{\eta(q)\,h(\eta(q))}
\ \ge\
\frac{2\psi(qy+z)}{d(qy+z)\,F(\log d(qy+z))}.
\]
By Lemma~\ref{lem:sum_psi_dn_infty}, it follows that
\[
\sum_{q=1}^\infty \frac{\mathrm{m}(A_q)}{\eta(q)\,h(\eta(q))}=\infty.
\]
Therefore Proposition~\ref{prop:all01} implies that for every open set $U\subset \T$,
\[
\mathrm{m}\bigl(W_{y,z}(\psi,\gamma)\cap U\bigr)\ \ge\ \frac{\mathrm{m}(U)^2}{16(C_0+1)}.
\]
Since $U$ was arbitrary, Proposition~\ref{prop:BDV} yields
\[
\mathrm{m}\bigl(W_{y,z}(\psi,\gamma)\bigr)=1,
\]
completing the proof.
\end{proof}

\bigskip

\end{document}